\newcounter{CountAlpha}
\theoremstyle{theorem}
\newtheorem{Thm}{Theorem}[section]
\newtheorem{Lem}[Thm]{Lemma}
\newtheorem{Cor}[Thm]{Corollary}
\theoremstyle{definition}
\newtheorem{Def}[Thm]{Definition}
\newtheorem{Ex}[Thm]{Example}
\newtheorem{Rk}[Thm]{Remark}
\newtheorem{Constr}[Thm]{Construction}
\newcommand{\car}{\operatorname{char}}
\newcommand{\conv}{\operatorname{conv}}
\newcommand{\Spec}{\operatorname{Spec}}
\newcommand{\ini}{\operatorname{in}}
\newcommand{\inv}{\operatorname{inv}}
\newcommand{\Int}{{\operatorname{Int}}}
\newcommand{\sep}{{\operatorname{sep}}}
\newcommand{\K}{k}
\newcommand{\gqz}{{\geq 0}}
\newcommand{\Bl}{{\operatorname{Bl}}}
\newcommand{\IA}{\mathbb{A}}
\newcommand{\IG}{\mathbb{G}}
\newcommand{\IP}{\mathbb{P}}
\newcommand{\IR}{\mathbb{R}}
\newcommand{\IQ}{\mathbb{Q}}
\newcommand{\IZ}{\mathbb{Z}}
\newcommand{\Gm}{\mathbb{G}_m}
\newcommand{\Gmu}{\pmb{\mu}}
\newcommand{\cC}{\mathcal{C}}
\newcommand{\cI}{\mathcal{I}}
\newcommand{\cO}{\mathcal{O}}
\newcommand{\cQ}{\mathcal{Q}}
\newcommand{\cS}{\mathcal{S}}
\newcommand{\cU}{\mathcal{U}}
\newcommand{\cW}{\mathcal{W}}
\newcommand{\cZ}{\mathcal{Z}}
\newcommand{\mm}{\mathfrak{m}}
\newcommand{\rd}{{\operatorname{red}}}
\numberwithin{equation}{section}
\newcommand{\double}{\genfrac..{0pt}1
{\raise -1pt\hbox{$\scriptstyle\longrightarrow$}}{\raise 3pt\hbox
{$\scriptstyle\longrightarrow$}}}
\begin{document}

\title{Torus actions, weighted blow-ups, and desingularization of plane curves}

\subjclass[2020]{ 14H20, 14B05, 14M25,   14D23}

\keywords{Resolution of singularities, Artin stacks, curve singularities, weighted blow-ups.}

\author{Dan Abramovich}
\address{\tiny Department of Mathematics, Brown University, Box 1917,
	Providence, RI~02912, USA}
\email{dan\_abramovich@brown.edu}

\author{Ming Hao Quek}
\address{\tiny Harvard University, Department of Mathematics, 1 Oxford Street, Cambridge, MA 02138}
\email{mhquek@math.harvard.edu}

\author{Bernd Schober}
\address{\tiny  None
(Hamburg, Germany)}
\email{schober.math@gmail.com}

\date{\today}

\begin{abstract}
	
	Given a singular 
	hypersurface in a regular
	2-dimensional scheme 
	essentially of finite type over a field, 
	we construct an embedded resolution of singularities by weighted blow-ups.
	This differs from our earlier work 
	which required multi-weighted blow-ups.
	We deduce an inductive argument, despite the fact that higher dimensional tangent spaces arise, by taking  torus actions and equivariant centers into account.
		In addition, we do not have to restrict to perfect base fields.

\end{abstract}

\maketitle

\setcounter{tocdepth}{1}
\tableofcontents

\section{Introduction}

In the present article, we are concerned with \emph{resolution of singularities} in positive characteristics. This is in general a challenging topic, however we focus on the case of plane curves,  known since the early days of the subject, beginning with Newton.

A new method of resolution in characteristic 0, using stack-theoretic weighted blow-ups, was introduced in \cite{ATW_weighted, Marzo, McQuillan}.
One must wonder if this method has implications in positive characteristics, how it relates to existing methods, and what possible hurdles it encounters. 
Specifically, colleagues have asked us to elaborate the first case, of plane curves.

Stack-theoretic weighted blow-ups stood in contrast with earlier methods, as traditional order reduction  necessitates, in general, numerous steps.
However, in positive characteristic, 
a weighted blow-up $ S' \to S $ of a regular surface $ S $
provides an Artin stack of the form $ S' = [\cW'/\IG_m]$, 
where the action has possibly inseparable stabilizers, and $\cW'$ is a 3-dimensional scheme.
Therefore, we are facing an inductive challenge
as we do not stay within the case of plane curves when performing weighted blow-ups.
In other words, even as we  prove that the order strictly decreases after the blow-up, 
we cannot apply an induction on the maximal value achieved by the order 
since the resulting singularity is not a plane curve.

In our earlier work \cite{AQS-plane}, 
we overcame this challenge 
under the additional assumption of a perfect base field
via replacing $S'$ by another stack-theoretic blow-up $S'' \to S$, which is a Deligne--Mumford stack, where the order is still reduced after blowing up, allowing us to repeat the process since we stay within the setting of plane curves.

In this paper, we provide  a complete treatment of weighted resolution of plane curves directly, 
using the weighted blow-up $S' = [\cW'/\IG_m]$ itself. 
This has the benefit that the \emph{logarithmic} order of the singularity drops at each step,
which implies that the order drops as well. 
In order to be able to repeat blowing up $S'$, we expand the generality where we allow higher dimensional schemes, but require them to possess torus actions. 
In contrast with \cite{AQS-plane}, 
our methods allow us to handle curves defined over an arbitrary field $k$, which need not be perfect.
The precise setup is as follows:

\begin{Thm}
	\label{Thm:Main2} 
	Let $ \cZ \subset \cW $ be a hypersurface in a regular scheme  $ \cW $ of dimension $n+2$ 
	which is essentially of finite type over an arbitrary field $ k $.
	Assume given an action of $T=\IG_m^n$ on $\cW$ stabilizing $\cZ$, having finite, possibly non-reduced, stabilizers.  
	Let $ \cQ \subset \cZ $ be a singular closed orbit
	such that the reduction $ \cZ_{\rm red} $ of $ \cZ $ is singular along $\cQ$. 
	\begin{enumerate} 
	\item\label{It:well-defined-eq} There is a well-defined, unique $T$-invariant center $J=(x_1^{a_1}, x_2^{a_2})$ of maximal invariant $(a_1,a_2)$ that is admissible for $ \cZ \subset \cW$ and supported along $\cQ$,
	where $ a_1 $ is the order of $ \cZ$ along $ \cQ $. 
	Moreover, $J$ is stable under base change to separable field extensions of $k$.

	\item\label{It:order-drops-eq} Let $\bar J = (x_1^{1/w_1}, x_2^{1/w_2})$ be the associated reduced center. Let $B \to \cW$ be the associated degeneration to the normal cone of $\bar J$, with a natural open subscheme $\cW':=B_+ \subset B$ admitting a $T' = T \times \IG_m$-action lifting the $T$-action on $\cW$. Then the action of $T'$ on $\cW'$ has finite stabilizers, and the morphism  $[\cW' /T'] \to [\cW /T]$ is the weighted blow-up along $\bar J$, which is a proper and birational morphism of tame Artin stacks.
	
	\item\label{It:order-drops-new} Let $\cZ'\subset \cW'$ be the proper transform of $\cZ$. 
	Then the logarithmic order of $\cZ'$ with respect to the exceptional divisor at every point lying over $ \cQ $ is strictly smaller than $ a_1 $.
	In particular, the invariant  defined  in Part \eqref{It:well-defined-eq} of the center  associated to $\cZ' \subset \cW'$  at every point of $\cW'$  lying over $ \cQ$ is strictly smaller than $ (a_1,a_2)$.

	\end{enumerate}
		Iterating $\cW' \to \cW$, one obtains in finitely many steps $\cW^{(m)} \to \cW^{(m-1)}$,  with the reduction $\cZ^{(m)}_\rd \subset \cW^{(m)}$ nonsingular.
		Taking quotients  $[\cZ^{(m)}/T^{(m)}]_\rd \subset [\cW^{(m)}/T^{(m)}]$ we obtain   an embedded resolution of singularities of $[\cZ/T]\subset [\cW/T]$ by tame Artin stacks. 
\end{Thm}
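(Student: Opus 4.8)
The argument has three main ingredients, corresponding to Parts~\eqref{It:well-defined-eq}--\eqref{It:order-drops-new}, followed by a global iteration.

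\emph{Part~\eqref{It:well-defined-eq}.} Working in an \'etale neighborhood of the closed orbit $\cQ$, one chooses a $T$-semi-invariant regular system of parameters at a point of $\cQ$ and a local equation $f$ for $\cZ$. One then forms Hironaka's characteristic polyhedron $\Delta$ of $f$ with respect to a privileged pair $(x_1,x_2)$ among these parameters --- the pair transverse to the directrix of the tangent cone of $\cZ$ along $\cQ$ --- so that $\Delta$ is a polyhedral subset of $\IR_{\gqz}^2$. The order $a_1=\ord_\cQ \cZ$ is the value of $\Delta$ on the $x_1$-axis and $a_2$ is read off from the remaining extremal vertex of $\Delta$, giving $J=(x_1^{a_1},x_2^{a_2})$. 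Well-definedness, $T$-invariance and stability under separable base change are then the standard robustness properties of the \emph{normalized} characteristic polyhedron: its minimality makes $\Delta$ independent of the permissible changes of parameters; torus-equivariance is available because representations of $T$ split into weight spaces; and $\Delta$ is insensitive to separable (hence \'etale) extensions. Admissibility and lexicographic maximality of $(a_1,a_2)$ among $T$-invariant centers supported on $\cQ$ are direct checks. The new point compared with \cite{AQS-plane} is that perfectness is not needed --- inseparable behavior is absorbed into the equivariant structure rather than removed by base change to the perfect closure.

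\emph{Part~\eqref{It:order-drops-eq}.} From $\bar J=(x_1^{1/w_1},x_2^{1/w_2})$ one builds the extended Rees algebra attached to the weighted $\bar J$-filtration; its relative $\Spec$ over $\cW$ is $B$, with the grading giving the extra $\Gm$, and passing to $B_+$ yields $\cW'$ with the lifted $T'=T\times\Gm$-action. The $T'$-stabilizers are finite: over the locus where $\cW'\to\cW$ is an isomorphism they are the (finite) $T$-stabilizers, while over the exceptional locus the new $\Gm$ contributes stabilizers annihilated by $\operatorname{lcm}(w_1,w_2)$. These are finite diagonalizable group schemes, hence linearly reductive, so $[\cW'/T']$ is a tame Artin stack in every characteristic. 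That $[\cW'/T']\to[\cW/T]$ is the weighted blow-up along $\bar J$ is its construction, and properness and birationality follow from those of $B\to\cW$ together with the fact that nothing changes away from $\cQ$.

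\emph{Part~\eqref{It:order-drops-new} and the iteration.} On a chart of the weighted blow-up the parameters transform as $x_i=u^{w_i}x_i'$, and the proper transform $f'$ of $f$ is obtained by dividing the pullback of $f$ by the appropriate power of the exceptional parameter $u$. The monomials of $f$ corresponding to vertices of $\Delta$ strictly below the weighting line are killed or lowered, and minimality of $\Delta$ forces the residual initial form $\ini(f')$ to be nonzero of order strictly below $a_1$ at every point over $\cQ$; hence the lexicographic invariant $(a_1,a_2)$ strictly drops there (and if $\cZ'_\rd$ happens to become nonsingular there, there is nothing further to do). For the iteration, the invariant takes values in a lexicographically well-ordered set, so strict descent, combined with Noetherian induction on the singular locus, terminates; at each stage the center is canonical by Part~\eqref{It:well-defined-eq}, so the local weighted blow-ups glue to a global one, every $[\cW^{(i)}/T^{(i)}]$ is a tame Artin stack by Part~\eqref{It:order-drops-eq}, and the composite $[\cW^{(m)}/T^{(m)}]\to[\cW/T]$ is proper and birational with $[\cZ^{(m)}/T^{(m)}]_\rd$ nonsingular, as claimed. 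The main obstacle is Part~\eqref{It:order-drops-new}: establishing the strict order drop at \emph{every} point over $\cQ$ over an arbitrary, possibly imperfect, field, where the characteristic-$p$ mechanism producing the higher-dimensional tangent spaces flagged in the abstract must be controlled through the equivariant combinatorics of $\Delta$ rather than evaded by a field extension.
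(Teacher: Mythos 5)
Your Part~\eqref{It:order-drops-eq} matches the paper's argument (lift the co-action to the extended Rees algebra, check finiteness of the relative inertia after base change to the smooth cover $\cW\to[\cW/T]$), but Parts~\eqref{It:well-defined-eq} and \eqref{It:order-drops-new} have genuine gaps, and you miss the single device the paper uses to close both of them: \emph{localization at the generic point $q=\eta_\cQ$ of the orbit}. The local ring $\cO_{\cW,q}$ is a $2$-dimensional regular local ring essentially of finite type over $k$ (with residue field $K(\cQ)$, typically imperfect even when $k$ is perfect), so Theorem~\ref{Thm:Main1} applies verbatim and produces the unique center $J$ at $q$. Your alternative --- choosing a ``privileged pair'' of semi-invariant parameters in an \'etale neighborhood and forming a two-variable polyhedron --- is not justified as stated: in the $(n+2)$-dimensional ambient scheme the expansion of $f$ involves all parameters, and the polyhedron you would actually get lives in $\IR_{\gqz}^{e}$ for $e$ possibly larger than $2$; reducing to a $2$-dimensional polyhedron is precisely what the passage to $\eta_\cQ$ accomplishes. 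Moreover, ``representations of $T$ split into weight spaces'' does not by itself give $T$-invariance of $J$ or its extension to all of $\cQ$: the paper must argue that $J$ spreads out to an open $U\subset\cQ$, that $U$ is stable under $T(k^{\sep})$ (this is where the uniqueness of $J$ together with its stability under separable base change is actually used), and then use density of $T(k^{\sep})$ in the geometrically reduced torus $T$ to conclude $U$ is $T$-stable, hence $U=\cQ$ since $\cQ$ is an orbit. You state the separable-base-change property but never connect it to proving invariance along the whole orbit.

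For Part~\eqref{It:order-drops-new} you assert that ``minimality of $\Delta$ forces'' the order drop at \emph{every} point over $\cQ$, but this is exactly the conclusion, not an argument. The chart computation only controls points lying over the generic point $q$; the points of $\cW'$ lying over closed points of $\cQ$ are not reached by it (and, as Example~\ref{Ex:1_2_3}\eqref{Ex:1_imperfect} shows, the polyhedron genuinely changes as one moves along a subvariety). The paper's fix is again orbit-theoretic: every point of $\cW'$ over $\cQ$ lies in the same $T'$-orbit as a point over $q$, and the order is constant along orbits, so the drop at points over $q$ (which is Theorem~\ref{Thm:Main1}\eqref{It:order-drops}, already proved) propagates to all of $\cQ$. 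Without either the localization-plus-orbit mechanism or a genuine substitute for it, your sketch does not establish the two hardest claims of the theorem.
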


Theorem \ref{Thm:Main2} is proved in Section \ref{Sec:Proofs}. Recall that a scheme $S$ is essentially of finite type over a field $ k $ if $S = \cup_{i=1}^m \Spec (A_i)$, where $A_i$ are localizations of finite-type $ k $-algebras.

In previous papers \cite{ATW_weighted,AQ,Quek-Rydh}, there are multiple ways to define centers. Among these, two are used here: \emph{monomial valuations} and \emph{Rees algebras}. In Section~\ref{Sec:inv}, we chose to treat centers as monomial valuations as these are sufficient for defining unique invariants and unique centers. In Section~\ref{Sec:weighted}, we use the description of centers in terms of Rees algebras to explain weighted blow-ups.

An immediate consequence  of Theorem~\ref{Thm:Main2} 
 is the following:

\begin{Thm} \label{Thm:resolution} 
Let $C \subset S$ be a hypersurface in a regular $ 2 $-dimensional scheme $S$ which is essentially of finite type over a field $ k $.
 Then the sequence of weighted blow-ups of Artin stacks introduced in Theorem~\ref{Thm:Main2}, applied to   $\cZ=C$ and $\cW = S$ and $T$ the trivial group,  
gives   a stack-theoretic  embedded resolution of singularities of $C \subset S$.

	\end{Thm}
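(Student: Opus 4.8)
The plan is to deduce Theorem~\ref{Thm:resolution} from Theorem~\ref{Thm:Main2} in the special case where the auxiliary torus is trivial. First I would check that the data $(\cW,\cZ,T) = (S, C, \IG_m^0)$ satisfies the hypotheses of Theorem~\ref{Thm:Main2} with $n=0$: the scheme $S$ is regular of dimension $2 = 0+2$ and essentially of finite type over $k$, the hypersurface $C$ is stabilized by the trivial action of $T = \IG_m^0$, and the trivial group has (finite, reduced) stabilizers. Under the trivial group every orbit is a point, so a ``singular closed orbit $\cQ$ with $\cZ_{\rm red}$ singular along $\cQ$'' is simply a closed point at which $C_{\rm red}$ fails to be regular; as $C_{\rm red}$ is a reduced, excellent, at most one-dimensional scheme, such points form a zero-dimensional closed subset, hence a finite set (and if it is empty there is nothing to do and the resolution is the identity).

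Next I would run the iteration of Theorem~\ref{Thm:Main2}. At a singular closed point $\cQ$ of $C_{\rm red}$, Parts~\eqref{It:well-defined-eq} and~\eqref{It:order-drops-eq} produce the weighted blow-up $[\cW'/T'] \to [\cW/T]$ along the canonical reduced center $\bar J$ supported at $\cQ$: a proper birational morphism of tame Artin stacks, where now $T' = \IG_m$ acts on a regular scheme $\cW'$ of dimension $3 = 1+2$ with finite stabilizers, and $\cZ' \subset \cW'$ is the $T'$-invariant proper transform. The crucial point is that this output is again an instance of the setup of Theorem~\ref{Thm:Main2}, now with $n=1$, so the construction applies to it verbatim; this is precisely why Theorem~\ref{Thm:Main2} is phrased for arbitrary tori $\IG_m^n$ and not merely for schemes. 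At each subsequent step the rank of the acting torus grows by exactly one, the underlying scheme stays regular, the dimension increases by one, and the stabilizers stay finite, all by Part~\eqref{It:order-drops-eq}; meanwhile, by Part~\eqref{It:order-drops-new} the invariant of the proper transform strictly drops at every point over the current center, and since the blow-up is an isomorphism over $\cW \setminus \cQ$ the remaining singular points are untouched. Hence the iteration described in the last clause of Theorem~\ref{Thm:Main2} applies and terminates after finitely many weighted blow-ups with $\cZ^{(m)}_{\rm red}$ nonsingular; passing to the quotient stacks and invoking Part~\eqref{It:order-drops-eq} at each stage (tameness, smoothness, properness, birationality) yields the asserted embedded resolution of $C \subset S$ by tame Artin stacks.

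Essentially all of the substance — the existence and uniqueness of the center, the order drop, and termination — is already packaged inside Theorem~\ref{Thm:Main2}, so there is little to prove here beyond this translation, which is why the statement is flagged as an immediate consequence. If I had to isolate the one step demanding care, it is the bookkeeping that the hypotheses of Theorem~\ref{Thm:Main2} genuinely propagate along the tower of blow-ups: after the $i$-th step one still has a regular scheme of dimension $(n+i)+2$ carrying a $\IG_m^{n+i}$-action with finite stabilizers and an invariant hypersurface. This, however, is exactly what Part~\eqref{It:order-drops-eq} supplies, so no real obstacle remains.
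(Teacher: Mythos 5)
Your proposal is correct and matches the paper's intent exactly: the paper offers no separate proof, declaring the statement an immediate consequence of Theorem~\ref{Thm:Main2} applied with $\cZ=C$, $\cW=S$, and $T$ trivial, which is precisely the specialization and iteration you carry out. Your added bookkeeping that the hypotheses propagate along the tower (torus rank and dimension each growing by one, supplied by Part~(2)) is the right observation and is exactly why Theorem~\ref{Thm:Main2} is stated for arbitrary $\IG_m^n$.
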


An application of Bergh's destackification \cite{Bergh} provides
 a scheme-theoretic embedded  resolution
 of $\cZ \subset \cW$ and of  $[\cZ/T] \subset [\cW/T]$, see \cite[Theorem~8.1.3]{ATW_weighted}. For further discussion see Section \ref{Sec:destack}.

Rather than proving the result directly on higher dimensional schemes, we pass to the localization at the generic point of $\cQ$ to reduce to the case of a singular 
hypersurface
 $C\subset S$ on a regular 2-dimensional scheme. This approach requires the following upgrade of \cite[Theorem 1.1]{AQS-plane} to the case where $S$ is essentially of finite type over an arbitrary field:

\begin{Thm}
	\label{Thm:Main1} 
	Let $ C \subset S $ be a hypersurface in a regular $ 2 $-dimensional scheme $ S $ essentially of finite type over a field $k$.
	Let $ q \in C $ be a singular closed point
	such that the reduction $ C_{\rm red} $ of $ C $ is singular at $ q $.
	\begin{enumerate} 
	\item\label{It:well-defined}  There is a well-defined, unique center $J=(x_1^{a_1}, x_2^{a_2})$ of maximal invariant $(a_1,a_2)$ admissible for $ C \subset S $ at $q$,
	where $ a_1 $ is the order of $ C$ at $ q $. 
	Moreover, $J$ is stable under base change to separable field extensions of $k$. (See Theorem \ref{Thm:J_unique}.)
	
	\item\label{It:birational} 
	Let $\bar J = (x_1^{1/w_1}, x_2^{1/w_2})$ be the associated reduced center. Its stack-theoretic weighted blow-up $S' = Bl_{\bar J}(S)\to S$ is a proper birational morphism from a smooth Artin stack $S'$ which is a global quotient of a variety by $\IG_m$, in particular tame.
	(See \cite[Lemma~1.1.2 and Remark~3.2.10]{Quek-Rydh}.) 
	
	\item\label{It:order-drops} The logarithmic order of the proper transform of $C$  with respect to the exceptional divisor  at every point of the blow-up lying over $ q $ is strictly smaller than $ a_1 $. (See Section \ref{Sec:Proofs}.)
	\end{enumerate}
\end{Thm}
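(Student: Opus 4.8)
The plan is to reduce all three parts to a statement about a single element of a complete regular local ring of dimension two over a possibly imperfect residue field, and then to re-run the argument of \cite[Theorem~1.1]{AQS-plane} in that generality; this is indeed the ``upgrade'' pointed to before the statement. For the reduction: the order of $C$ at $q$, Hironaka's characteristic polyhedron, admissibility of a center supported at $q$, the associated weighted blow-up and the proper transform of $C$, and the orders of that transform at the points over $q$ are all unchanged under the faithfully flat map $\cO_{S,q}\to R:=\widehat{\cO_{S,q}}$ (the blow-up is of finite presentation and $R$ is flat over $\cO_{S,q}$ with regular fibres). Since $S$ is regular of dimension $2$ and essentially of finite type over $k$, the ring $R$ is a complete regular local ring of dimension $2$ containing $k$, so by the Cohen structure theorem $R\cong\kappa[[x_1,x_2]]$ with $\kappa:=\kappa(q)$ a finitely generated — in general imperfect — extension of $k$. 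This is the only place where ``essentially of finite type'' replaces ``finite type'': we never use finiteness of the ambient scheme, only that its completion at $q$ is a power-series ring over a field.

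For Part~\eqref{It:well-defined}, write $C=V(f)$ with $f\in R$ and $a_1:=\ord f$. Following \cite{AQS-plane}, expand $f$ in a regular system of parameters, form its Newton polyhedron, and pass to Hironaka's characteristic (minimal) polyhedron $\NP(f;x_1,x_2)\subseteq\IR^2_{\gqz}$ — minimal among coordinate changes defined over $\kappa$, hence in general strictly larger than the one obtained over $\kappa^{1/p^\infty}$, but still unique by Hironaka's theory. Its shape is governed by $a_1$ and one further rational slope, yielding the pair $(a_1,a_2)$ — with $a_1=\ord f$ and $a_2$ maximal subject to $(x_1^{a_1},x_2^{a_2})$ being admissible, i.e.\ to $\NP(f;x_1,x_2)$ lying on or above the segment joining $(a_1,0)$ and $(0,a_2)$ — together with the reduced center $\bar J=(x_1^{1/w_1},x_2^{1/w_2})$, where $(w_1,w_2)$ is the primitive integer vector proportional to $(a_2,a_1)$. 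Since $C_{\rm red}$ is singular at $q$, $f$ is not a unit times a power of any single parameter, so $f\notin(x_1^{a_1})$, the polyhedron has a vertex with first coordinate $<a_1$, and $a_2$ is finite. Well-definedness — independence of $\bar J$ from the coordinates attaining the polyhedron — is the intrinsic-ness of the reduced center, proved as in \cite{AQS-plane}; and because orders, leading forms, characteristic polyhedra and regular systems of parameters are all insensitive to a separable extension $k\subseteq k'$ (inducing $\kappa\hookrightarrow\kappa\otimes_k k'$, a finite product of separable field extensions), the center is stable under such base change.

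Part~\eqref{It:birational} is formal once $\bar J$ is in hand: the stack-theoretic weighted blow-up of the regular scheme $S$ along the monomial center $\bar J$ is, by the results quoted in the statement, a proper birational morphism $S'=\Bl_{\bar J}(S)\to S$ with $S'$ a smooth Artin stack, a global quotient of a regular scheme by $\Gm$, hence tame. For Part~\eqref{It:order-drops}, cover $S'$ by its two standard charts; on each, the proper transform $f'$ of $f$ is obtained from $f$ by the monomial substitution dictated by $(w_1,w_2)$ and division by the exceptional coordinate to the power equal to the $\bar J$-weighted order of $f$, which equals $1$ because $\NP(f;x_1,x_2)$ meets the segment from $(a_1,0)$ to $(0,a_2)$. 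For $q'\in S'$ over $q$ one has $\ord_{q'}f'\le\ord_{q'}(f'|_{\cE})$, where $\cE$ is the exceptional locus and $f'|_{\cE}$ is the dehomogenization of the weighted leading form $\ini_{(w_1,w_2)}(f)$, a nonzero section of a line bundle on the stacky curve $\cE\cong\IP(w_1,w_2)$. One checks that this section cannot vanish to order $\ge a_1$ at any point of $\cE$: were it to, a direct analysis of the leading form — as in \cite{AQS-plane} — would either produce a coordinate change over $\kappa$ strictly shrinking $\NP(f;x_1,x_2)$, or yield an admissible center of strictly larger invariant, contradicting minimality of the polyhedron or maximality of $(a_1,a_2)$; in the sole delicate case $w_1=w_2=1$ (the ordinary blow-up, forced when $a_2=a_1$) the offending point is automatically $\kappa$-rational, since a one-variable form of degree $\le a_1$ with a zero of multiplicity $a_1$ has a rational zero, so no inseparable descent intervenes. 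Hence $\ord_{q'}f'<a_1$.

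The main obstacle is verifying that Hironaka's characteristic-polyhedron machinery — vertex preparation, termination of the polyhedral game, and intrinsic-ness of the reduced center — remains valid when $\kappa$ is imperfect, so that one may work throughout with the polyhedron taken over $\kappa$ itself and never descend to $\kappa^{1/p^\infty}$. In this two-dimensional hypersurface setting this is the case: ``solving'' a vertex is a monomial substitution with coefficients in $\kappa$ whose obstruction is a $p$-power condition on a coefficient in $\kappa$, unaffected by separable extension and only weakened by passing to a larger polyhedron, which does no harm since Part~\eqref{It:order-drops} needs only that the $\kappa$-characteristic polyhedron realizes $a_2$. The example $f=x_1^2+t\,x_2^2$ over $\IF_2(t)$ exhibits the phenomenon: the directrix is zero-dimensional over $\IF_2(t)$ (becoming a line only over $\IF_2(t^{1/2})$), the characteristic polyhedron over $\IF_2(t)$ cannot be solved, so $\bar J=\mm$ and the single ordinary blow-up along $\mm$ drops the order from $2$ to $1$ over $\IF_2(t)$ — even though over $\IF_2(t^{1/2})$ it would stay $2$, which is immaterial since both the hypothesis on $C_{\rm red}$ and the order are taken relative to the base field.
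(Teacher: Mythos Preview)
Your strategy matches the paper's: extract the center from Hironaka's characteristic polyhedron, defer Part~\eqref{It:birational} to \cite{Quek-Rydh}, and for Part~\eqref{It:order-drops} bound the order of $f'$ along the exceptional locus by the order of the weighted initial form $f_{J,1}$, which cannot reach $a_1$ since $f_{J,1}$ is not a pure $\nu$-th power (the vertex $\delta$ being unsolvable over $\kappa$). The paper packages this last step as Corollary~\ref{Cor:leading-term} together with the short argument in Section~\ref{Sec:Proofs}; your contradiction via ``shrinking the polyhedron or enlarging the invariant'' is the same idea in different words.

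The one substantive difference is your passage to the completion $R=\widehat{\cO_{S,q}}\cong\kappa[[x_1,x_2]]$. The paper deliberately works over $\cO_{S,q}$ itself, and for good reason: Part~\eqref{It:well-defined} asserts that the maximal admissible center is \emph{presented} by a regular system of parameters of $\cO_{S,q}$ (that is what $J=(x_1^{a_1},x_2^{a_2})$ means in Section~\ref{Sec:inv}), whereas the parameters you produce realizing the characteristic polyhedron a priori live only in $R$. The paper closes this gap by citing Cossart--Piltant \cite{Cossart-Pitant-compl} (see Theorem~\ref{Thm:poly}\eqref{It:char_poly}), who prove that parameters achieving the characteristic polyhedron already exist in $\cO_{S,q}$ without completion --- this is in fact the key technical input that makes the ``essentially of finite type'' upgrade work. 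You should either invoke that result, or else argue explicitly that the valuation $v_J$ and the Rees algebra $\bigoplus_n\cI_n$ descend along $\cO_{S,q}\hookrightarrow R$ and that regularity of $B_+$ can be verified after this faithfully flat base change. Once this is addressed your argument is complete; your treatment of separable base change and the rationality remark in the $w_1=w_2=1$ case are a bit more informal than the paper's explicit computation at the end of Theorem~\ref{Thm:J_unique}, but the ideas are the right ones.
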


In Section~\ref{Sec:comparison}  we compare the resolution via weighted blow-ups presented here with  other methods for resolution of plane curve singularities.

\begin{Rk}
The assumption here that $S$ be essentially of finite type over a field is minimalistic --- it is sufficient for our method to work. There is no doubt that one can further relax this assumption, but we opt to keep things simple in this paper.

Also for simplicity we do not pursue logarithmic resolution, though the same methods apply.
\end{Rk}
\begin{Rk}
One could restrict attention to the case where the subscheme $C$ in Theorems~\ref{Thm:resolution},  \ref{Thm:Main1} and correspondingly $\cZ$ in Theorem \ref{Thm:Main2} is reduced. However the proof of Theorem \ref{Thm:Main1}  requires looking at the weighted normal cone of $C$, which is often non-reduced even when $C$ is. 
The generality we consider, with the reduction being singular, requires no additional effort.

This generality is also in line with model treatments of resolution of non-reduced schemes, such as \cite[Definition~6.2]{CJS}: a non-reduced scheme $X$ is \emph{quasi-regular} if $X_\rd$ is regular and $X$ normally flat along $X_\rd$, namely the Hilbert--Samuel function is constant along $X_\rd$
(see \cite{CPS}). 
A quasi-regular scheme is as resolved as a non-reduced scheme can possibly be.

But for a hypersurface, the Hilbert--Samuel function is automatically constant once $X_\rd$ is regular. See  also \cite[Remarks 6.19, 6.20]{CJS} and related discussion in \cite[Remark~(1.13)]{HiroCharPoly}. Thus our singularity assumption is exactly  that the subscheme $C$ or $\cZ$ is not quasi-regular.

\end{Rk}

\subsection*{Acknowledgements}

This research work was supported in part by funds from BSF grant 2022230, NSF grant DMS-2401358, and Simons Foundation Fellowship SFI-MPS-SFM-00006274.
Dan Abramovich thanks the Hebrew University of Jerusalem  for its hospitality during Spring 2025.

We thank Andr\'e Belotto da Silva, Ang\'elica Benito, Raymond van Bommel, Ana Bravo, Vincent Cossart, Santiago Encinas, Mike Montoro, James Myer,  Michael Temkin and Jaros{\l}aw W{\l}odarczyk for discussions on the topic. We thank the referee for numerous comments that helped us improve the paper.

\section{A brief introduction to Hironaka's characteristic polyhedron}
\label{Sec:Hiro}

First, we give a short overview on Hironaka's characteristic polyhedron \cite{HiroCharPoly} in the special case of particular principal ideals which is the situation that we are interested in. 
Detailed introductions to the topic can be found in \cite[Chapter~8 and appendix]{CJS} as well as in \cite{CS-Compl}, for instance.

Let $ ( \cO, \mm, K ) $ be a regular local ring
with maximal ideal $ \mm $ and residue field $ K \simeq \cO/\mm $.
Let $f \in \mm $ be a non-zero element.
Recall that the order of $ f $ at $ \mm $ is defined as 
\[
	\nu := \nu_\mm(f) := \sup \{ \alpha \in \IZ_+ \mid f \in \mm^\alpha  \} .
\]

We make the assumption on $ f $ that there exists a regular system of parameters $ (x,y) = (x_1, \ldots, x_e,y) $ of $\cO $ such that
\begin{equation}
	\label{eq:Dir=1}
		f \equiv Y^\nu \mod \mm^{\nu+1}
\end{equation}
where $ Y := y \mod \mm^2 $ denotes the image of $ y $ in the graded ring $ \operatorname{gr}_\mm(\cO) = \bigoplus_{a \geq 0} \mm^a/\mm^{a+1} $.
In Remark~\ref{Rk:dir_dim_zero} we address how we reduce to this situation for $ 1 $-dimensional schemes essentially of finite type over a field. 
Since $ \cO $ is Noetherian, we have a {\em finite} expansion 
\[
	f = y^\nu + \sum_{(A,b)} c_{A,b} x^A y^b 
\] 
with $ c_{A,b} \in \cO^\times \cup \{ 0 \} $ and multi-index notation $ x^A = x_1^{A_1} \cdots x_e^{A_e} $,
see \cite[Proposition~2.1]{Cossart-Pitant}.

\begin{Def}
	\begin{enumerate}
		\item 
		The {\em projected polyhedron} of $ f $ with respect to $ (x;y) $ is defined as
		\[
		\Delta(f;x;y) \ := \
		\conv \left( \left\{ \frac{A}{\nu - b} + \IR_\gqz^e  \ \Big| \  c_{A,b} \neq 0 \right\} \right) , 
		\]
		where $ \conv (*) $ denotes the convex hull.
		
		\item 
		For a vertex $ v  \in \Delta(f;x;y) $
		the {\em initial form of $ f $ at $ v $}
		is given by
		\[
		\ini_v (f) \ := \  Y^\nu + \sum_{(A,b) : \frac{A}{\nu- b} = v} \overline{c_{A,b}} X^A Y^b \ \in \  K[X,Y],
		\]
		where 
		$ \overline{c_{A,b}} $ are the images of the coefficients in the residue field $ K \simeq \cO/\mm $.

		\item 
		A vertex $ v = (v_1, \ldots, v_e) \in \Delta (f;x;y) $ is called {\em solvable} if $ v \in \IZ_{\geq 0}^ e $ and there exists a (unique) $ \lambda \in K \smallsetminus \{ 0 \} $ such that
		$ \ini_v (f) = (Y-\lambda X^v)^\nu  = (Y-\lambda X_1^{v_1}\cdots X_e^{v_e})^\nu $.
	\end{enumerate}
\end{Def}

Illustrative pictures of the projected polyhedron and the initial form at a vertex can be found in \cite[Examples 18.18, 18.23, and 18.31]{CJS}.

Suppose $ v $ is a solvable vertex. 
Let $ \epsilon \in \cO^\times $ be any lift of $ \lambda $ in $ \cO $, i.e., $ \epsilon \equiv \lambda \mod \mm $.  
If we set $ y' := y - \epsilon x^v $, then $ v \notin \Delta(f;x;y') $ and $  \Delta(f;x;y) \supsetneq  \Delta(f;x;y') $
by \cite[Lemma~(3.10)]{HiroCharPoly}.

\begin{Ex}
	Let $ \cO = K [[x,y]] $ be of dimension two for a field $ K $ of characteristic 2 and consider the polynomial $ f = y^2 + \lambda x^4  + x^5$
	with $ \lambda \in K $.
	The vertex of the projected polyhedron $ \Delta(f;x;y) $ is $ v = 2 $.
	
	If $ \lambda \in K^2 $ is a square, then $ v $ is solvable via the change $ y' := y + \mu x^2 $ with $ \mu \in K $ such that $ \mu^2 = \lambda $.
	The resulting polyhedron $ \Delta(f;x;y') $ has vertex $ v' = 5/2 $ which is not solvable.
	
	On the other hand, if $ K $ is imperfect and if $ [K^2 (\lambda):K^2] = 2 $, i.e., $ \lambda $ is not a square, then $ v = 2 $ is not solvable. 
\end{Ex}

For a non-empty convex subset $ \Delta \subseteq \IR_\gqz^e $, we define the number 
\[  
	\delta(\Delta) := \inf\{  v_1 + \cdots + v_e \mid (v_1, \ldots, v_e ) \in \Delta\} . 
\]
Furthermore we set $ \delta(\emptyset) := \infty $ 
and abbreviate $ \delta(f;x;y) :=  \delta(\Delta(f;x;y)) $.

Let us point out that $ \delta(f;x;y) = \infty $ if and only if $ f =\epsilon  y^\nu  $ for some unit $ \epsilon\in \cO $.
In this case the reduction of $ \{ f = 0 \} $ is regular.

\begin{Thm}
	\label{Thm:poly}
	Let $ f \in \mm $ be an element such that \eqref{eq:Dir=1} holds.
	\begin{enumerate}
	\item \label{It:char_poly}
	There is an element $ z \in \cO $ such that $ (x,z) $ is a regular system of parameters for $ \cO $ 
	and $ \Delta(f;x;z) $ is minimal with respect to inclusion among all possible $ \Delta(f;x;y) $ with $ (x,y) $ a regular system of parameters for $ \cO $.
	
	\item \label{It:delta_inv}
	The number $ \delta(f;x;z) $ 
	is an invariant of the singularity $ \cO/(f) $.
	\end{enumerate}
\end{Thm}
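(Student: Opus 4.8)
The plan is to derive both parts from Hironaka's analysis of how the projected polyhedron varies under admissible changes of the regular system of parameters: in the end this is the hypersurface case of the main results of \cite{HiroCharPoly} (see also \cite[Chapter~8]{CJS} and \cite{CS-Compl}), so the real work is organizing the reduction and isolating what is actually needed. First I would record the admissible changes. With $(x,y)$ fixed as in \eqref{eq:Dir=1}, any regular system of parameters completing $x$ and again satisfying \eqref{eq:Dir=1} is, up to multiplying the last parameter by a unit --- which does not change $\Delta(f;x;\cdot)$ --- of the form $y'=y-\varphi$ with $\varphi\in\mm^2$, and after absorbing the $y$-dependent part of $\varphi$ one may even take $\varphi\in(x_1,\dots,x_e)$; so Part~\eqref{It:char_poly} asks for a minimal element of the family $\{\Delta(f;x;y-\varphi)\}_\varphi$. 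The engine is the move recalled just before Theorem~\ref{Thm:poly}: dissolving a solvable vertex $v$ via $y\mapsto y-\epsilon x^{v}$ strictly shrinks the polyhedron by \cite[Lemma~(3.10)]{HiroCharPoly}, and I would also use the refinement (again in \cite{HiroCharPoly}) that this change leaves the polyhedron unchanged in the region $\{u_1+\dots+u_e<v_1+\dots+v_e\}$.

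For Part~\eqref{It:char_poly} I would call $(x,z)$ \emph{prepared} (Hironaka's \emph{normalized}) if $\Delta(f;x;z)$ has no solvable vertex, and proceed in two steps. \emph{(a) A prepared system realizes the minimum.} If $\Delta(f;x;z-\varphi)$ were strictly smaller than $\Delta(f;x;z)$ for some $\varphi$, a leading-term analysis of $\varphi$ shows that the lowest vertex of $\Delta(f;x;z)$ falling outside $\Delta(f;x;z-\varphi)$ would have to be solvable, contradicting preparedness; hence $\Delta(f;x;z)\subseteq\Delta(f;x;y')$ for every admissible $y'$, which also yields uniqueness of the minimal polyhedron. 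This is the substance of \cite{HiroCharPoly}. \emph{(b) A prepared system exists.} Starting from an arbitrary $(x,y)$, repeatedly dissolve the solvable vertex with smallest $v_1+\dots+v_e$ (ties broken by a fixed total order on $\IR_\gqz^e$); by the refinement of Lemma~(3.10) each step strictly shrinks the polyhedron while freezing its part below $v$, so $\delta(f;x;\cdot)$ is nondecreasing and the vertices stabilize from the bottom up. For $e=1$ --- the only case needed in this paper, since in Theorem~\ref{Thm:Main2} one reduces to it by localizing at the generic point of $\cQ$ --- the polyhedron is the single number $\delta$, whose denominator divides $\nu!$, so a bounded nondecreasing such sequence is eventually constant, and once $\delta$ is constant the unique vertex must be non-solvable; thus the process terminates in finitely many steps (if instead $\delta\to\infty$ then, as noted before Theorem~\ref{Thm:poly}, $f$ is a unit times $y^\nu$ after completion and the reduction is already regular). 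For general $e$ one forms the $\mm$-adic limit $z$ of the sequence of parameters (Hironaka's \emph{normalization}): since $\Delta$, solvability of vertices, and $\delta$ are unchanged by $\cO\to\widehat\cO$, one may run the argument in $\widehat\cO$ and then descend $z$ to $\cO$ using that $\cO$ is excellent.

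For Part~\eqref{It:delta_inv}, note first that by Part~\eqref{It:char_poly} the number $\delta(f;x;z)$ does not depend on the prepared completion $z$, so it is a function $\delta(f;x)$ of the $x$-part alone, and it remains to prove independence of $x$. Here I would use a valuative reformulation: \eqref{eq:Dir=1} forces $\nu_\mm(f)=\nu$, so every monomial $x^Ay^b$ occurring in the expansion of $f$ satisfies $|A|\ge\nu-b$, whence $\delta(f;x;y)\in[1,\infty]$; and if $v_t$ denotes the monomial valuation attached to $(x,y)$ with $v_t(x_i)=1$ and $v_t(y)=t$, then $v_t(f)=t\nu$ precisely when $t\le\delta(f;x;y)$, so that $\delta(f;x)=\sup\{\,t\ge 1:\ v_t(f)=t\nu\ \text{for some admissible }(x,y)\,\}$. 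Two choices of the $x$-part are related by $\tilde x_i=\sum_j a_{ij}x_j+h_i$ with $(a_{ij})$ invertible modulo $\mm$ and $h_i\in\mm^2$; since $t\ge 1$, the corrections $h_i$ do not affect the valuation ideals entering this supremum, and one verifies that the $\mathrm{GL}_e$-part, followed by a re-preparation of the last parameter, does not change the supremum either. Carrying out this last verification --- equivalently, pinning down a genuinely coordinate-free description of $\delta$ and controlling the combined $\mathrm{GL}_e$-plus-re-preparation coordinate changes --- is the main obstacle; it is exactly what \cite{HiroCharPoly} establishes, so I would cite it rather than reprove it.
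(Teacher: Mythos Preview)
Your sketch is broadly sound and tracks the same circle of ideas, but the paper's own proof is simply a collation of precise citations, and in two places your attributions diverge from theirs in ways that matter.

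For Part~\eqref{It:char_poly}, your argument that a prepared system realizes the minimum and that for $e=1$ the vertex-dissolving process terminates in finitely many steps (solvable vertices are integers, the sequence strictly increases, and divergence to $\infty$ forces $f$ to be a $\nu$-th power in the completion, contradicting the standing hypothesis that $C_{\rm red}$ is singular) is correct and pleasantly self-contained --- in fact more explicit than what the paper writes. But for general $e$ you say ``descend $z$ to $\cO$ using that $\cO$ is excellent''; this descent is exactly the nontrivial content of \cite{Cossart-Pitant-compl} (and its generalization \cite{CS-Compl}), which the paper cites by name. Excellence alone does not hand you an element of $\cO$ realizing the minimal polyhedron; one needs the specific arguments of those papers, so you should cite them rather than gesture at excellence.

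For Part~\eqref{It:delta_inv}, your valuative reformulation $\delta(f;x;y)=\sup\{t\ge 1: v_t(f)=t\nu\}$ is correct and a nice way to phrase it, and you rightly isolate the $x$-independence as the crux. However, you attribute this to \cite{HiroCharPoly}; the paper instead cites \cite[Corollary~B(3)]{CJSch}. Hironaka's 1967 paper constructs the characteristic polyhedron and proves its minimality over $y$, but the full statement that $\delta$ is an invariant of $\cO/(f)$ --- independent of the choice of $x$ and of the generator $f$ --- is precisely what \cite{CJSch} (``Invariance of Hironaka's characteristic polyhedron'') was written to establish. So your final ``it is exactly what \cite{HiroCharPoly} establishes'' is misplaced; replace that citation with \cite{CJSch}.
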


The minimal polyhedron $ \Delta(f;x) := \Delta(f;x;z) $ 
appearing in Theorem~\ref{Thm:poly}\eqref{It:char_poly}
is called {\em the characteristic polyhedron of $ f $}.
We emphasize that this is an ad-hoc definition. 
Originally, $ \Delta(f;x) $ is defined as the intersection over all possible projected polyhedra $ \Delta(f;x;y) $ with varying $ y $.

\begin{proof}[Proof of Theorem~\ref{Thm:poly}] 
	This follows from a combination of results from \cite{HiroCharPoly,Cossart-Pitant-compl,CJSch}:
Hironaka proved that in the $ \mm $-adic completion $ \widehat{\cO} $ there is an element $ \widehat{z} $ with $ \Delta(f;x;\widehat{z}) = \Delta(f;x) $ in \cite[Theorems~(3.17) and (4.8)]{HiroCharPoly}.
In \cite{Cossart-Pitant-compl} the existence of $ z \in \cO $, not necessitating completion, was shown for the given setting.
Generalizations followed in \cite{CS-Compl}.  
The second part on the invariant is proven in \cite[Corollary~B(3)]{CJSch}.
\end{proof}

\begin{Def}
	\label{Def:Fdelta}
	Let $ f \in \cO $ be as above and let $ z $ be as in Theorem~\ref{Thm:poly}. 
	Set $ \delta := \delta(f;x;z)  $.
	The {\em $ \delta $-initial form} of $ f  = \sum_{(A,b)}  c_{A,b} x^A z^b $ is defined as 
	the initial form of $ f $ at the vertex $ \delta $, which can be rewritten as
	\[
	\ini_\delta(f)
	\ = \  
	\sum_{(A,b) \, : \, \frac{|A|}{\delta\nu} + \frac{b}{\nu} = 1} \overline{c_{A,b}} X^A Z^b.
	\]

	We introduce the following lift to $ \cO $ of the $ \delta $-initial form of $ f $:
	\[
		F_\delta \ :=  \  \sum_{(A,b) \, : \, \frac{|A|}{\delta\nu} + \frac{b}{\nu} = 1} c_{A,b} x^A z^b \in \cO.
	\]
\end{Def}

Observe that the definition of the lift depends on the choice of lifts for the coefficients of the initial forms from $ K = \cO/\mm $ to $ \cO $.
Since they  originate from certain elements $ c_{A,b} \in \cO^\times \cup \{ 0\} $, we choose them as the lifts.

\begin{Ex}
	\phantomsection
	\label{Ex:1_2_3}
	\begin{enumerate}
		\item \label{Ex:1_imperfect}
		Let $ \K $ be an imperfect field of characteristic $ p $ 
		and $ a \in  \K \smallsetminus \K^p $.
		Consider 
		\[  
			f \, = \, y^p- x_1^m (x_1^p + a)^{pm+1} - a x_2^{pm}
		 \ \in \ k[x_1,x_2,y] 
		\]
		with $ m > p  $ coprime to $ p $.
		
		At the closed point $ q $ given by the maximal ideal $ (x_1, x_2, y) $ 
		the residue field is equal to $ k $.
		The vertices of $ \Delta(f;x;y) $ are 
		$ v_1 := (\frac{m}{p}, 0) $ and $ v_2 := (0, m) $.
		Observe that $ \ini_{v_2} (f) = Y^p - a X_2^{pm} $ and since $ a \notin \K^p $, the vertex $ v_2 $ is not solvable. 
		Thus, none of the vertices is solvable and $  \Delta(f;x;y) =  \Delta(f;x) $ is the characteristic polyhedron (at $ q $).
		
		Let us look at the point $ \widetilde{q} $ corresponding to the maximal ideal $ (x_1^p + a , x_2, y) $.
		The residue field at this point is $ K := k [x_1]_{(x_1^p+a)}/(x_1^p +a)  $. 
		Possible local parameters at $ \widetilde{q} $ 
		are $ ( u_1 := x_1^p + a , x_2, y ) $.
		By substituting $ a = u_1 - x_1^p $,
		$ f $ can be rewritten as 
		\[
			f \,  = \, y^p- x_1^m u_1^{pm+1} - u_1 x_2^{pm} + x_1^p x_2^{pm} .
		\]
		Taking into account that $ x_1 $ is invertible at $ \widetilde q $,
		the vertices of the corresponding projected polyhedron $ \Delta(f;u_1, x_2;y) $ are 
		$ \widetilde{v_1} := (m+\frac{1}{p},0) $ and $ \widetilde{v_2} := (0, m)$.
		This time $ \widetilde{v_2} $ is solvable since $ \ini_{\widetilde{v_2}} (f) = Y^p + \overline{x_1}^p X_2^{pm} = (Y + \overline{x_1} X_2^m )^p $
		where  $ \overline{x_1} $  
		denotes the image of $ x_1 $ in the residue field $ K $.
		Introducing  $ z := y + x_1 x_2^{m} $
		the vertices of $ \Delta(f;u_1, x_2;  z) $
		are $ \widetilde{v_1} $ and $ \widetilde{v_3} := (\frac{1}{p},m) $.  
		Both vertices are not solvable and hence $ \Delta(f;u_1, x_2; z) = \Delta(f;u_1, x_2) $ is the characteristic polyhedron (at $ \widetilde q $).

		\item \label{Ex:2_localize}
		Let $ \K $ be a perfect field of positive characteristic $ p $
		and let $ \cO $ be the localization of $ \K[x,y,z] $ at the ideal $ (x,y) $. 
		Let $ m \in \IZ_+ $ with $ m > p $
		and 
		 \[ 
		 	f \, = \, y^{2p} + x^m y^p z^{m+p} + x^{2m} z^{p+1} + x^{2m-1} y  \ \in \ \cO .
		 \]
		Notice that $ (x,y) $ is a regular system of parameters for $ \cO $ and $ z $ is invertible in $ \cO $.
		The projected polyhedron $ \Delta(f;x;y) = \Delta (f;x)$ coincides with the characteristic polyhedron since the vertex is $ \delta = \frac{m}{p} $ which is not solvable. 
		
		We have $ \ini_\delta(f) = Y^{2p} + X^m Y^p \overline{z}^{m+p} + X^{2m} \overline{z}^{p+1} $,
		where $ \overline{z} $ denotes the image of $ z $ in the residue field $ \cO/ (x,y) $. 
		
	\end{enumerate}
\end{Ex}

\section{Invariant, parameters, and center}\label{Sec:inv}

	In the next step, we define the invariant in the setting of Theorem~\ref{Thm:Main1} and deduce from this the designated center for the weighted blow-up.
	
	Let $ C \subset S $ be a hypersurface in a regular $ 2 $-dimensional scheme $ S $ which is essentially of finite type over a field $ k $
		and let $ q \in C $ be a singular closed point such that $ C_{\rm red} $ is singular at $ q $. 
	Let $ (\cO,\mm,K) $ be the local ring of $ C $ at $ q $
	and let $ f \in \mm $ be an element defining $ C $ locally at $ q $.
	Sometimes, we will abuse notation by writing $ C = V(f) $. 

\begin{Rk}
	\label{Rk:dir_dim_zero}
	Let us address the situation if \eqref{eq:Dir=1} does not hold. 
	Then $ \ini_\mm(f) := f \mod \mm^{\nu+1} \in \operatorname{gr}_\mm(\cO) $ cannot be expressed as polynomial in a single variable.
	For readers familiar with Hironaka's notion of the directrix (for example, see~\cite[Definitions~2.8 and 2.26]{CJS}), the latter means that the dimension of the directrix is zero.
	Due to a result of Hironaka \cite[Theorem~2]{HiroAdditive} which was generalized by Mizutani \cite{Mizutani} (see also \cite[Theorem~3.14]{CJS})
	blowing up the closed point $ q $ will lead in this situation to a strict decrease of the order at points lying above $ q $.

	In the remainder of the article 
	we include the discussed case by setting $ \delta := 1 $ and $ \ini_\delta (f) := \ini_\mm (f) $ if  
	\eqref{eq:Dir=1} does not hold.
\end{Rk}

\begin{Def}\label{Def:invariant}
	Let $ q \in C \subset S $ be as before, locally given as $ q \in V(f) 
	\subset \Spec(\cO) $. 
		The {\em invariant of $ C $ at $ q $} is defined as
		\[ 
			\inv_{C} ( q ) \, := \,  \inv_{f} (q) \, := \,  (a_1,a_2) \, := \,  (\nu, \delta\nu) \ \, \in \ \,  \Gamma \, := \,  \IZ_{\geq 0} \times \frac{1}{a_1!} \IZ_{\geq 0} ,
		\] 
		where $ \nu $ is the order of $ C $ at $ q $, $ \delta \in \IQ_+ $ is the number introduced in Theorem~\ref{Thm:poly}\eqref{It:delta_inv} (resp.~Remark~\ref{Rk:dir_dim_zero}),
			and $ \Gamma $ is equipped with the lexicographical ordering.
\end{Def}

Recall that our assumption, $ C_{\rm red} $ is singular at $ q $, implies that $ \delta := \delta(f;x;z) < \infty $, see the paragraph just before Theorem~\ref{Thm:poly}.

Using parameters $ (z,x) $ with the property $ \Delta(f;x;z) = \Delta(f;x)$, we shall first determine our designated center for the weighted blow-up in terms of monomial valuations. In the next section, we will re-interpret the center as a Rees algebra (see Remark~\ref{Rk:centers}(3)), and discuss the weighted blow-up along the center. While the following definition depends on choices, we show in Theorem~\ref{Thm:J_unique} that the center is unique.
Despite working in a more general setup, the proof follows the same lines as \cite[Theorem~4.2]{AQS-plane}.

\begin{Def} \label{Def:centers}
	Let $ ( \cO, \mm, K ) $ be a regular local ring of dimension $ 2 $ which is essentially of finite type over a field $ k $.
		\begin{enumerate}
			\item 
			For {\em any} 
 parameters $ (x_1, x_2) $ of $ \cO $
 and {\em any} positive rational numbers $ a_1, a_2 \in \IQ_+ $, 
 			we define $J = (x_1^{a_1},x_2^{a_2})$ to be the monomial valuation 
 			$v_J\colon \cO \smallsetminus \{0\} \to \IQ$
 			given by 
 			\[
 				v_J(x_1) = \frac{1}{a_1},
 				\quad 
 				v_J(x_2) = \frac{1}{a_2}.
 			\]
 			We call such monomial valuation $J$ \emph{a center}.
 		\item 
 		For a non-zero element $ f \in \cO  \smallsetminus \{0\}$,
 		we say $J = (x_1^{a_1},x_2^{a_2})$ is {\em admissible} for $ f $
 		if we have $ v_J(f) \geq 1 $. 
		\end{enumerate}
\end{Def}

\begin{Rk}\label{Rk:centers}
	\phantomsection
	\label{Rk:v_J}
	\begin{enumerate}
		\item 
		Since $ v_J $ is a monomial valuation, we have:
		For any $ g \in \cO\smallsetminus \{0\} $
		with expansion $ g (x_1,x_2) = \sum\limits_{(i_1,i_2)} d_{i_1, i_2} x_1^{i_1} x_2^{i_2} $ for $ d_{i_1,i_2}\in \cO^\times \cup \{ 0 \} $,
		the following holds: 
		\[ 
		v_J(g) = \min\left\{ \frac{i_1}{a_1} +  \frac{i_2}{a_2} \ \Big| \  d_{i_1,i_2} \neq 0\right\}.
		\]
		In particular, we have $ v_J(x_1^{a_1}) = v_J(x_2^{a_2}) = 1 $. 
		
		\item 
		Observe that $ J =  (x_1^{a_1},x_2^{a_2}) $ is admissible for a given $ f $ if and only if $f\in (x_1^{a_1},x_2^{a_2})^{\Int}$.
		
		\item \label{It:local} 
		For resolution of $ 1 $-dimensional schemes the local perspective via valuations is enough. 
		In general, for instance in Theorem~\ref{Thm:Main2}, one has to describe centers globally, 
		as for example in \cite[Section~2.2]{ATW_weighted} via \emph{valuative $\IQ$-ideals}, or equivalently in \cite[Corollary 2.11]{MingHao} via integrally closed Rees algebras. See Remark~\ref{Rk:weighted-blow-ups} for a global description of centers as Rees algebras.
	\end{enumerate}
\end{Rk}

\begin{Thm}
	\label{Thm:J_unique}
	Let $ q \in C = V(f) 
	\subset \Spec(\cO) $ be as described at the beginning of this section.
	Let $ (a_1, a_2 ) := (\nu, \nu \delta ) $ 
and let $ (x_1, x_2) $ be a regular system of parameters for $ \cO $ such that $ \Delta (f;x_2;x_1) = \Delta(f;x_2) = [\delta ; \infty [  $.
Then $ J = (x_1^{a_1}, x_2^{a_2} ) $ is the unique admissible center for $ f $ for which $ (a_1,a_2) $ with $ a_1 \leq a_2 $ attains its maximum with respect to the lexicographical order.
Moreover, $J$ is stable under base change to separable field extensions of $ k $.
\end{Thm}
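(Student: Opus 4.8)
The plan is to separate the statement into two parts: first, that $(a_1,a_2) = (\nu, \nu\delta)$ is the maximal invariant achieved by an admissible center, and second, the uniqueness and base-change stability. For the maximality, the key computation is to show $v_J(f) = 1$ for $J = (x_1^{a_1}, x_2^{a_2})$ with the polyhedron-minimizing parameters. Writing $f = \sum_{(A,b)} c_{A,b}\, x_2^A x_1^b$ with $x_1 = z$ playing the role of the distinguished variable satisfying \eqref{eq:Dir=1} (so $A$ here is a single exponent since $e = 1$ in the $2$-dimensional case), Remark~\ref{Rk:v_J}(1) gives $v_J(f) = \min_{(A,b): c_{A,b}\neq 0} \left( \tfrac{A}{a_2} + \tfrac{b}{a_1} \right) = \tfrac{1}{\nu}\min_{(A,b)}\left(\tfrac{A}{\delta} + b\right)$. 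The term $b = \nu$, $A = 0$ contributes exactly $1$; every other term contributes $\tfrac{1}{\nu}\left(\tfrac{A}{\delta} + b\right)$, and since $\left(\tfrac{A}{\nu - b}\right) \in \Delta(f;x_2;x_1) = [\delta, \infty[$ forces $A \geq \delta(\nu - b)$, i.e.\ $\tfrac{A}{\delta} + b \geq \nu$. Hence $v_J(f) = 1$ and $J$ is admissible. For the other direction — no admissible center has larger invariant — I would argue that if $J' = (x_1'^{a_1'}, x_2'^{a_2'})$ is admissible with $a_1' \leq a_2'$, then $a_1' \leq \nu$ since $v_{J'}(f) \geq 1$ forces (looking at the leading form $Y^\nu$, which has $v_{J'}$-value $\nu/a_1'$ on its $Y$-part, together with the directrix being a line) the order bound; and when $a_1' = \nu$ one shows $a_2' \leq \nu\delta$ using the minimality of the characteristic polyhedron — any admissible center with $a_1' = \nu$ determines a parameter choice whose projected polyhedron has $\delta$-value at least $a_2'/\nu$, so minimality of $\Delta(f;x_2)$ gives $\delta \geq a_2'/\nu$.

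For uniqueness, suppose $J' = (x_1'^{a_1'}, x_2'^{a_2'})$ is another admissible center attaining the maximal invariant $(\nu, \nu\delta)$. Since $a_1' = \nu$ and $v_{J'}(f) \geq 1$, the valuation $v_{J'}$ restricted to $\operatorname{gr}_\mm(\cO)$ must detect the leading form $Y^\nu$, which forces $x_1'$ to agree with a parameter $y$ satisfying \eqref{eq:Dir=1} modulo higher-valuation terms; then $x_1'$ differs from a polyhedron-minimizing parameter $z$ only by an element whose effect on the polyhedron is trivial (otherwise one could strictly shrink the polyhedron, contradicting Theorem~\ref{Thm:poly}\eqref{It:char_poly}). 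A symmetric argument using $v_{J'}(f) = 1$ and $a_2' = \nu\delta$ pins down $x_2'$. This is essentially the content of \cite[Theorem~4.2]{AQS-plane}, and I expect the argument to transcribe with only notational changes; the one point requiring care is that $K$ need not be perfect, so "solvability" of a vertex is genuinely weaker than the vertex being integral, but since the characteristic-polyhedron machinery of Theorem~\ref{Thm:poly} (via \cite{Cossart-Pitant-compl, CS-Compl, CJSch}) is already stated over an arbitrary field, the abstract input we need is available.

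For base-change stability along a separable extension $k \hookrightarrow k'$, let $\cO' := \cO \otimes_k k'$ localized at a point over $\mm$; separability ensures $\cO'$ is again regular, with $(x_1, x_2)$ still a regular system of parameters and the residue extension $K \hookrightarrow K'$ separable. The expansion $f = y^\nu + \sum c_{A,b} x^A y^b$ is unchanged, and the order $\nu$ is preserved. The key point is that a vertex $v$ of $\Delta(f;x_2;x_1)$ that is non-solvable over $K$ remains non-solvable over $K'$: solvability asks for $\lambda \in K'$ with $\ini_v(f) = (Y - \lambda X^v)^\nu$, equivalently $\lambda^\nu$ and the relevant symmetric functions of $\lambda$ lie in the coefficient field; since $\lambda$ is then purely inseparable over $K$ (its minimal polynomial divides something of the form $T^\nu - c$ after the substitution, up to characteristic-$p$ bookkeeping) while $K'/K$ is separable, $\lambda \in K' \Rightarrow \lambda \in K$, contradicting non-solvability over $K$. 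Hence $\Delta(f; x_2; x_1)$ is still polyhedron-minimizing over $k'$, so $\delta$ and $(a_1, a_2)$ are unchanged and $J$ is literally the same ideal $(x_1^{a_1}, x_2^{a_2})$ in $\cO'$. The main obstacle I anticipate is precisely this non-solvability-is-preserved claim: making rigorous that the would-be solving parameter $\lambda$ is purely inseparable over $K$ requires unwinding the structure of $\ini_v(f)$ in characteristic $p$ (the case $p \mid \nu$ versus $p \nmid \nu$), and this is where the bulk of the honest work lies.
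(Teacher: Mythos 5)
Your overall strategy coincides with the paper's. The maximality part is exactly the paper's argument: admissibility of $J$ follows from the computation $v_J(f)=1$ using that every point of $\Delta(f;x_2;x_1)=[\delta;\infty[$ satisfies $\frac{A}{\nu\delta}+\frac{b}{\nu}\geq 1$; the bound $b_1\leq\nu$ comes from the existence of a unit-coefficient monomial of total degree $\nu$ in any expansion of $f$; and the bound $b_2\leq\nu\delta$ comes from $\delta(f;y_2;y_1)\leq\delta$, which is Theorem~\ref{Thm:poly} (note it is really part~\eqref{It:delta_inv}, invariance of $\delta$ under change of \emph{all} parameters, that you need here, not just minimality for fixed $x$). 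Your base-change argument is a valid minor variant of the paper's: you show directly that a solving element $\lambda\in K'$ is purely inseparable over $K$ --- the ``honest work'' you anticipate is only Lucas' theorem, giving that the coefficient of $Y^{\nu-p^m}X^{vp^m}$ in $(Y-\lambda X^v)^\nu$ is $\pm\nu'\lambda^{p^m}$ with $p\nmid\nu'$, so $\lambda^{p^m}\in K$ --- and then separability of $K'/K$ forces $\lambda\in K$; the paper instead extracts a $p^m$-th root $\psi$ of $\ini_\delta(f)$ over $K$ and performs a Tschirnhaus substitution, but both hinge on the same fact $K'^{p^m}\cap K=K^{p^m}$. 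The one step where your sketch, taken literally, would fail is uniqueness: you propose to ``pin down'' the parameters $x_1'$ and $x_2'$, but the parameters are genuinely not unique --- the paper's proof explicitly exploits the freedom to replace $x_2$ by any $\widetilde{x_2}$ with $\mm=(x_1,\widetilde{x_2})$. What is unique is the monomial valuation $v_J$, equivalently the integral closures $(x_1^{ma_1},x_2^{ma_2})^{\Int}$. The correct mechanism (in \cite[Theorem~4.2]{AQS-plane}, to which both you and the paper defer) is: normalize $x_2=y_2$ using that $\IP(\mm/\mm^2)$ has at least three rational points, show $v_{J_y}(x_1)=v_{J_x}(y_1)=\frac{1}{a_1}$ by evaluating $v_{J_y}$ on an expansion of $f$ and using admissibility, and conclude $J_x=J_y$ from equality of integral closures. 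Since you cite the right source and the rest of your write-up is sound, this is a presentational rather than a fatal defect, but be aware that the argument you would need to transcribe is not the one you describe.
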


We note that this theorem implies Theorem~\ref{Thm:Main1}\eqref{It:well-defined}.

\begin{proof}
	Our goal is to show
	\begin{equation}
	\label{eq:lexmax} 
	\max_{\geq_{\rm lex}} \{ (b_1, b_2) \in \IQ_+^2 \mid \exists \, (y_1,y_2) :  (y_1^{b_1},y_2^{b_2}) \mbox{ admissible center for } f  \}
	= (a_1, a_2) .
	\end{equation}

	First, the left hand side of Equation~\eqref{eq:lexmax} is $ \geq (a_1, a_2) $
	since $ J $ is admissible by the definition of  the characteristic polyhedron.

Secondly, $ a_1 $ is unique:
If $ (y_1^{b_1},y_2^{b_2})$ is another admissible center with $b_1 \leq b_2$, 
we must have $ b_1 \leq a_1 $ since $ a_1 = \nu_\mm(f) $ which implies the existence of a monomial $ y_1^a y_2^{\nu-a} $ with coefficient a unit in the expansion of $ f $.

		Thirdly, $ a_2 $ is unique:
		Let $ (y_1^{\nu},y_2^{b_2})$ be an admissible center. 
		Using the definition of the characteristic polyhedron and $ \delta(f;y_2;y_1) \leq \delta $, 
		it follows that we have to have $ b_2 \leq \nu \delta = a_2 $ which implies Equation~\eqref{eq:lexmax}.

	Finally, we show that the center is unique: 
	Let $ J_x = ( x_1^{a_1} , x_2^{a_2} ) $
	and $ J_y = ( y_1^{a_1} , y_2^{a_2} ) $ be two admissible centers for $ f $. 
	If $ a_1 = a_2 $ then $ v_{J_x} = v_{J_y} = a_1 \nu_m $ 
	and the desired equality $ J_x = J_y $ holds. 
	Hence, suppose $ a_1 < a_2 $. 
	
	\begin{enumerate}
		\item [(a)]
		\underline{$ x_2 = y_2 $\,:} 
		By construction, we may vary $ x_2 $ for the given $ x_1 $ as long as we have $ \mm = (x_1, \widetilde{x_2}) $ for the different choice. 
		The analogous statement is true for $ y_2 $. 
		Since $\IP(\mm/\mm^2)$ has at least 3 points,
		we may assume without loss of generality that $ x_2 = y_2 $. 
		Further, this provides $ v_{J_y} (x_2) =v_{J_x} (x_2) = \frac{1}{a_2} $. 
		
		\item[(b)]
		 \underline{$ v_{J_y} (x_1) = v_{J_x} (y_1) = \frac{1}{a_1} $\,:}
		 If $ x_1 $ and $ y_1 $ differ by a unit factor, the assertion clearly holds. 
		 Therefore, consider an expansion of $ x_1 $ in the regular system of parameter $ (y_1, x_2) $,
		 say $
		 	x_1 = \epsilon y_1 + \mu x_2^A 
		 	$
		 	for  
		 	$ A \in \IZ_+, \epsilon, \mu \in \cO^\times $.
		 Since $ v_{J_y} $ is a monomial valuation, this provides $ v_{J_y} (x_1) \leq v_{J_y} (y_1) = \frac{1}{a_1} $.
		 
		 Assume the inequality is strict, $ v_{J_y} (x_1) < \frac{1}{a_1} $.   
		 Expand $ f = \sum_{ ( i_1, i_2 )} d_{i_1,i_2} x_1^{i_1} x_2^{i_2} $.
		 Since $ J_x $ is admissible for $ f $,
		 we have $ \frac{i_2}{a_2} \geq \frac{a_1-i_1}{a_1} $ for every $ (i_2, i_2) $ such that $  d_{i_1,i_2} \in \cO^\times $ is a unit. 
		 Moreover, we have an equality if and only if $ i_1 = a_1, i_2 = 0 $. 
		 Using the non-Archimedean property, it follows that $ v_{J_y} (f) = a_1 v_{J_y} (x_1) < 1 $.
		 This contradicts the assumption that $ J_y $ is an admissible center for $ f $
		 and therefore we obtain $ v_{J_y} (x_1) = \frac{1}{a_1} $.
		Applying the analogous arguments leads to  $ v_{J_x} (y_1) = \frac{1}{a_1} $.
		
		\item[(c)]
		\underline{$ J_x = J_y $\,:}
		This follows since $(x_1^{ m a_1},x_2^{m a_2})^\Int =  (y_1^{m a_1},y_2^{m a_2})^\Int$
		for every $ m \in \IZ_{\geq 0} $. 
	\end{enumerate}
 
	It remains to prove that $J$ is stable under base change to separable field extensions of $ k $.
	Let $ k' / k $ be a separable extension. 
	Using \cite[Prop.~2.15]{Cossart-Pitant}
	(applied for $ h := \ini_\delta (f) $ considered as element in $ S[X_1] $ for $ S := K[X_2]_{ (X_2) } $ and $ \widetilde S := S \otimes_{k} k' $)
	it can be deduced that the characteristic polyhedron does not change under the base change $ k'/k $ and thus $ J $ remains stable.

	Since our given situation is simpler, we provide a direct argument:
	We have to study the behavior of $ \phi := \ini_\delta (f) \in K[X_1,X_2] $ under separable extensions. 
	Let $ \phi_s $ be the element $ \phi $ considered as element in $ (K \otimes_{k} k') [X_1,X_2] $. 
	We have to show that $ \phi_s $ is not a $ \nu $-th power, $ \phi_s \neq (X_1 + \alpha X_2^\delta )^\nu $ for $ \alpha \in (K \otimes_{k} k' ) \smallsetminus \{0\} $.
	If $ \delta \notin \IZ_+$, this is certainly not possible.
	Hence let $ \delta \in \IZ_+ $ and assume $ \phi_s = (X_1 + \alpha X_2^\delta )^\nu $.
	Write $ \nu = p^m \nu' $ for $ \nu' $ prime to $ p $ if $ \car(k) = p > 0 $ and $ \nu= \nu' , p^m = 1 $ else.
	Since the extension is separable, we have to have $ \phi = \psi^{p^m} $ for some $ \psi \in K [X_1,X_2] $.   
	Clearly, $ \nu' \geq 2 $ in this case and $ \psi_s = (X_1 +\alpha X_2^\delta )^{\nu'} $.
	We have $ \psi = X_1^{\nu'} + \sum_{i=1}^{\nu'} c_i X_1^{\nu'-i} X_2^{\delta i} $ for $ c_i \in K $.
	Let $ Y_1 := X_1 + \frac{c_1}{\nu'} X_2^\delta $.  
	Substituting $ X_1 = Y_1 - \frac{c_1}{\nu'} X_2^\delta $ provides  
	$ \psi = Y_1^{\nu'} + \sum_{i=2}^{\nu'} c_i' Y_1^{\nu'-i} X_2^{\delta i} \neq Y_1^{\nu'}  $ for certain $ c_i' \in K $.
	Observe that $ J $ remains stable under this substitution and that $ c_1' = 0 $.
	The last condition is stable by the base change with $ k' $, but this contradicts $ \psi_s = (X_1 +\alpha X_2^\delta )^{\nu'} $. 
\end{proof}

In order to draw a connection to the weighted normal cone, we need to compare $ f $ and $ F_\delta $ (Definition~\ref{Def:Fdelta}).
	Note that $ F_\delta $ is the sum over all monomials of valuation $ 1 $ in an expansion of $ f $.
To emphasize the relation to the valuation $ v_J $, we introduce the notation $ f_{J,1} := F_\delta .$
Hence, we have
\begin{equation}
	\label{eq:expansion} 
	f  =  f_{J,1} + \widetilde h,
	\quad
	\mbox{ where } \widetilde h \in \cO 
	\mbox{ with } v_{J} (\widetilde h) > 1 .
\end{equation}

\begin{Cor}\label{Cor:leading-term} 
	The reduction of $ V( f_{J,1}) $ 
	is singular at $ q $ and $\inv_{f_{J,1}}(q) = (a_1,a_2)$ with the same center  $(x_1^{a_1},x_2^{a_2})$ as for $ f $.
\end{Cor}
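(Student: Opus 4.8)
The statement to prove is Corollary~\ref{Cor:leading-term}: that $\{f_{J,1}=0\}$ has the same invariant $(a_1,a_2)$ and the same center $(x_1^{a_1},x_2^{a_2})$ at $q$ as $\{f=0\}$, and that its reduction is singular at $q$. The guiding principle is that $f_{J,1}=F_\delta$ is, by construction (Definition~\ref{Def:Fdelta} together with \eqref{eq:expansion}), precisely the sum of the monomials of $v_J$-valuation exactly $1$ in the expansion of $f$ with respect to the chosen parameters $(x_1,x_2)$ realizing the characteristic polyhedron $\Delta(f;x_2)=[\delta;\infty[$. So $f_{J,1}$ ``is'' the leading form of $f$ for the monomial valuation $v_J$, and the claim is essentially that passing to this leading form changes neither the order nor the $\delta$-invariant nor the optimal center.

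\textbf{Key steps.} First I would record that $v_J(f_{J,1})=1$ (every monomial of $F_\delta$ has valuation $1$ by definition, and these cannot all cancel since $x_1^\nu$ occurs — more precisely $f\equiv x_1^\nu \bmod \mm^{\nu+1}$ after the relabeling $(x,y)\rightsquigarrow(x_2,x_1)$ used in Theorem~\ref{Thm:J_unique}, so the monomial $x_1^\nu$ has valuation $\nu\cdot\frac1{a_1}=1$ and a unit coefficient, hence survives in $f_{J,1}$). In particular $J$ is admissible for $f_{J,1}$. Next, order at $q$: since $x_1^\nu$ appears in $f_{J,1}$ with a unit coefficient and no monomial of $f_{J,1}$ has total $\mm$-degree $<\nu$ — any such monomial $x_1^{i_1}x_2^{i_2}$ with $i_1+i_2<\nu$ would satisfy $\frac{i_1}{a_1}+\frac{i_2}{a_2}>\frac{i_1+i_2}{a_1}$... one must be slightly careful since $a_2\ge a_1$ makes valuation contributions from $x_2$ \emph{smaller}; the correct argument is that $f_{J,1}$ is a subsum of $f$, so $\nu_\mm(f_{J,1})\ge\nu_\mm(f)=\nu$, while the monomial $x_1^\nu$ forces $\nu_\mm(f_{J,1})\le\nu$; hence $\nu_\mm(f_{J,1})=\nu=a_1$, and moreover $\ini_\mm(f_{J,1})$ involves the variable $X_1$ nontrivially so \eqref{eq:Dir=1} holds for $f_{J,1}$ with the same $(x_2;x_1)$. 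Then, the $\delta$-invariant: I would show $\Delta(f_{J,1};x_2;x_1)$ has $\delta$-vertex equal to $\delta$ and that this is already the characteristic polyhedron, i.e.\ no vertex is solvable. That $\delta(f_{J,1};x_2;x_1)=\delta$: the monomials of $f_{J,1}$ are exactly those of $f$ sitting on the face $\{\frac{|A|}{\delta\nu}+\frac{b}{\nu}=1\}$, i.e.\ those of $f$ with $v_J$-valuation $1$, and these are precisely the ones contributing points on the line $v_1+v_2=\delta$ of $\Delta(f;x_2;x_1)$ — so $\Delta(f_{J,1};x_2;x_1)$ is the convex hull of that extreme face, which still has $\delta$-value $\delta$. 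Non-solvability: $\ini_\delta(f_{J,1})=\ini_\delta(f)$ by construction, and $\delta$ being a vertex of the characteristic polyhedron of $f$ means $\ini_\delta(f)$ is not a $\nu$-th power $(X_1-\lambda X_2^\delta)^\nu$; but any change of parameters $x_1\mapsto x_1-\epsilon x_2^\delta$ that could shrink $\Delta(f_{J,1};x_2;x_1)$ would have to solve the vertex $\delta$, which is impossible. (For the vertex case one also uses Theorem~\ref{Thm:poly}\eqref{It:delta_inv}, that $\delta$ is an invariant of the singularity, hence independent of parameters.) Thus $\delta(f_{J,1}) = \delta$ and $\inv_{f_{J,1}}(q)=(\nu,\nu\delta)=(a_1,a_2)$. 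Finally, $C_{\rm red}$ singular at $q$ for $f_{J,1}$ follows since $\delta<\infty$ here (as we just showed $\delta(f_{J,1})=\delta<\infty$), invoking the paragraph before Theorem~\ref{Thm:poly}; and uniqueness of the center $(x_1^{a_1},x_2^{a_2})$ for $f_{J,1}$ is now immediate from Theorem~\ref{Thm:J_unique} applied to $f_{J,1}$, whose hypotheses we have just verified with the \emph{same} parameters $(x_2;x_1)$.

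\textbf{Main obstacle.} The delicate point is not the order or admissibility — those are formal — but verifying that $(x_2;x_1)$ still realizes the \emph{characteristic} polyhedron for $f_{J,1}$, i.e.\ that no vertex of $\Delta(f_{J,1};x_2;x_1)$ is solvable. One must rule out solvability both at the vertex $\delta$ itself and at any other vertex, and in characteristic $p$ with imperfect residue field this is exactly where Example~\ref{Ex:1_2_3} shows subtlety can arise. The clean way through is to lean on Theorem~\ref{Thm:poly}: $\delta$ is an honest invariant of $\cO/(f)$, and solvability of a vertex $v$ would allow a parameter change strictly shrinking the polyhedron of $f_{J,1}$ below the line $v_1+v_2=\delta$; but since $f_{J,1}$ is purely supported on that line (all its monomials have $v_J$-valuation $1$), any such shrinkage forces $f_{J,1}$ itself, after the substitution, to become $\epsilon x_1^\nu$ — equivalently $\ini_\delta(f)=(X_1-\lambda X_2^\delta)^\nu$ — contradicting that $\delta$ was a vertex of the characteristic polyhedron of $f$. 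I would spell this reduction out carefully, treating the ``$\delta\notin\IZ_+$'' case (automatically no vertex can be solved there) separately from $\delta\in\IZ_+$, mirroring the end of the proof of Theorem~\ref{Thm:J_unique}.
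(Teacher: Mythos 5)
Your proposal is correct and follows essentially the same route as the paper's (much terser) proof: note that $\ini_\delta(f_{J,1})=\ini_\delta(f)$, conclude that the vertex $\delta$ is not solvable for $f_{J,1}$ — hence $f_{J,1}$ is not a unit times a $\nu$-th power of a parameter and its reduction is singular — and that $\Delta(f_{J,1};x_2;x_1)=[\delta;\infty[$ is already the characteristic polyhedron, so the invariant and the center are unchanged. The one wobbly micro-step is justifying $\nu_\mm(f_{J,1})=\nu$ by calling $f_{J,1}$ a ``subsum'' of $f$ (not a valid general principle); the clean argument is that every monomial $x_1^{i_1}x_2^{i_2}$ of $f_{J,1}$ satisfies $\frac{i_1+i_2}{a_1}\ge\frac{i_1}{a_1}+\frac{i_2}{a_2}=1$ because $a_1\le a_2$, so it has degree $\ge \nu$ with equality exactly for $x_1^{\nu}$, whence $f_{J,1}\equiv x_1^\nu \bmod \mm^{\nu+1}$.
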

\begin{proof}
		First, we have  $ \ini_\delta (f_{J,1})  = \ini_\delta (f) $.
		This implies that the vertex
		$ \delta \in \Delta (f_{J,1};x_2;x_1) $ 
		is not solvable. 
		In particular, $ f_{J,1} $ is not equal to the power of a regular parameter
		which implies the claim on the reduction of $ V( f_{J,1}) $. 
		
		Furthermore, the characteristic polyhedra of $ f_{J,1} $ and $ f  $ coincide,
		$ \Delta (f_{J,1};x_2;x_1) =  \Delta (f_{J,1};x_2) = [\delta ; \infty [ = 
		\Delta (f;x_2) $.
		So, the designated center for $ f_{J,1} $ is $(x_1^{a_1},x_2^{a_2})$ as well. 	
 \end{proof}

\section{Rees algebras, weighted blow-ups, and proper transform}\label{Sec:weighted}\label{Sec:transform}

Now that we have a candidate for the designated center, let us recall the concept of a stack-theoretic weighted blow-up. A general reference for them is \cite{Quek-Rydh}.
Weighted blow-ups are very closely connected to W{\l}odarczyk's notion of cobordant blow-ups \cite{Wlodarczyk-Rees}, 
where a torus action is taken into account instead of taking the stack-theoretic quotient.

\begin{Def}[{\cite[Section~2.4]{ATW_weighted}}] 
		\label{Def:reduced_center}
		For $J = (x_1^{a_1},x_2^{a_2})$,
		the associated \emph{reduced center} is 
		$\bar J := (x_1^{1/w_1}, x_2^{1/w_2})$, 
		with 
		$w_1,w_2 \in \IZ_+ $ 
		being the unique pair of relatively prime integers 
		such that 
		\[ 
		\ell \cdot \left(\frac1{w_1},\frac1{w_2}\right) \, = \,  (a_1,a_2)
		\] 
		for some $\ell \in \IZ_+$. 
		\end{Def}
		To calculate $(w_1,w_2)$, recall that $a_1 \in \IZ_+$, and write $a_2 = \frac{b}{c}$ for some relatively prime integers $b,c \in \IZ_+$. Setting $k:=\gcd(a_1,b)$, write $a_1 = ka_1'$ and $b = kb'$. Then $\ell = ka_1'b'$, $w_1 = b'$ and $w_2 = a_1'c$.

We introduce the weighted blow-up in terms of extended Rees algebras \cite{Rees}.
For $J = (x_1^{a_1},x_2^{a_2})$ with reduced center $\bar J = (x_1^{1/w_1}, x_2^{1/w_2})$
and $ n \in \IZ $,
set
$ 
\cI_n := \{g \in \cO \mid  v_{\bar J}(g) \geq n\}. 
$
Note that $\cI_n = \cO$ for $n\leq 0$. 
The \emph{extended Rees algebra associated to $\bar J$} is the $ \cO $-algebra
\begin{equation}\label{eq:extended-rees-algebra}
	\tilde R \, =  \, \bigoplus_{n\in \IZ} \cI_n. 
\end{equation}

This local construction becomes global  by setting $ \cI_n = \cO_S $ away from $ q $.

\begin{Rk}\label{Rk:weighted-blow-ups}
		While the center of a blow-up on a scheme $\cW$ is an ideal $\cI \subset \cO_\cW$, the center of a weighted blow-up on $\cW$ is in general an extended Rees algebra on $\cW$. 
		\begin{enumerate}
			\item Firstly, a Rees algebra on $\cW$ is a descending sequence of ideals $\cI_\bullet = (\cO_\cW = \cI_0 \supset \cI_1 \supset \cI_2 \supset \dotsb)$ so that $\bigoplus_{n \geq 0}{\cI_n}$ forms a finitely generated, graded $\cO_\cW$-algebra. (See \cite[Definition 3.1.1]{Quek-Rydh}.) In particular, for $n > 1$, $V(\cI_n)$ are infinitesimal thickenings of $V(\cI_1)$, i.e. $\sqrt{\cI_1} = \sqrt{\cI_n}$. 
			In addition, a Rees algebra $\cI_\bullet$ is \emph{integrally closed} if $\bigoplus_{n \geq 0}{\cI_n}$ is integrally closed in the graded algebra $\bigoplus_{n \geq 0}{\cO_\cW}$.
            
			\item Recall that an ideal $\cI \subset \cO_\cW$ gives rise to a Rees algebra $\cI_\bullet$ where $\cI_n := \cI^n$ for every $n \geq 0$. In this way, usual blow-ups are special cases of weighted blow-ups.

            \item One can extend a Rees algebra in negative degrees by setting $\cI_n = \cO_\cW $ for $n < 0$. In this way, one associates to a Rees algebra an \emph{extended Rees algebra}, see \cite[Definition 3.1.3]{Quek-Rydh}. Doing so makes it convenient to write down a presentation for the $\cO_\cW $-algebra $\bigoplus_{n \in \IZ}{\cI_n}$. See \cite[Lemma 5.2.1, Proposition 5.2.2, and Corollary 5.2.5]{Quek-Rydh} for more details.

            \item 
            Let $\cW$ be a regular scheme. We call $\cI_\bullet$ a \emph{center} if at every point $p \in \cW$, there exist a regular system of parameters $x_1,x_2,\dotsc,x_d$ at $p$, as well as positive rational numbers $a_1 \leq a_2 \leq \dotsb \leq a_k$ for some $1 \leq k \leq d$, such that locally around $p$, we have $$\cI_n \; = \; \{g \in \cO_\cW \mid \nu(g) \geq n\} \qquad \text{for every $n \in \IZ$}$$ where $\nu$ is the valuation over $\cW$ with center $p$ defined by \[
                \nu(x_i) = \begin{cases}
                    \frac{1}{a_i} \qquad &\text{if $1 \leq i \leq k$} \\
                    0 \qquad &\text{if $k < i \leq d$}.
                \end{cases}
            \]
            In that case, we write $\cI_\bullet = (x_1^{a_1},x_2^{a_2},\dotsc,x_k^{a_k})$, and define the invariant of $\cI_\bullet$ at $p$ as $(a_1,a_2,\dotsc,a_k)$. Note that this is well-defined. 
            In the language of \cite{ATW_weighted} and \cite[Section 2.2]{MingHao}, $\cI_\bullet$ is the integrally closed Rees algebra on $\cW$ corresponding to the valuative $\IQ$-ideal $\gamma$ on $\cW$ whose stalk at a valuation $\nu$ over $\cW$ is $\min\{a_i\nu(x_i) \mid 1 \leq i \leq k\}$.
            \item 
            As with the invariant introduced in Definition~\ref{Def:invariant}, we order invariants of centers by the lexicographical ordering.
		\end{enumerate}
\end{Rk}

\begin{Ex}\label{Ex:cusp}
		If $J = (x_1^2,x_2^3)$, then the associated reduced center is $\bar J = (x_1^{1/3},x_2^{1/2})$. This means $x_1$ is given weight $3$ and $x_2$ is given weight $2$. The associated extended Rees algebra $\cI_\bullet$ is given by $\cI_n = (x_1^ax_2^b \mid 3a+2b \geq n)$ for all $n \in \IZ$. Starting from $n=1$, the first few terms are $\cI_1=(x_1,x_2)$, $\cI_2 = (x_1,x_2)$, $\cI_3 = (x_1,x_2^2)$, $\cI_4 = (x_1^2,x_2^2,x_1x_2)$ and $\cI_5 = (x_1^2,x_2^3,x_1x_2)$.
\end{Ex}

As indicated in Part (3) of Remark~\ref{Rk:weighted-blow-ups}, it is a consequence of \cite[Proposition~5.2.2]{Quek-Rydh} that $\tilde R$ has the following local presentation:
\begin{equation}
	\label{eq:presentation}
	\tilde R \, = \, \cO[ s, x_1', x_2' ] / (x_1 - s^{w_1} x_1',x_2 -s^{w_2} x_2'),
\end{equation}
for 
$ s \in \cI_{-1} $ 
the element $ 1 \in \cO $, and 
$ x_i' \in \cI_{w_i} $
corresponding to
$ x_i $ in degree $ w_i $, $ i \in \{1, 2\} $.
Here,
$s$ is the exceptional variable,
defining the exceptional divisor once 
the blow-up
is defined.
Moreover,
$x_i'$ is called the transformed variable corresponding to $x_i$, for $ i \in \{ 1, 2 \} $.

\begin{Constr}[{Degenerating to the weighted normal cone, see \cite{Wlodarczyk-Rees, Quek-Rydh}}]
	\label{Constr:Degen}
	Let 
	$B = \Spec_S(\tilde R)$ 
	be 
	the spectrum relative to the scheme $ S  $ of the quasi-coherent sheaf of $ \cO_S $-algebras $ \tilde R $.
	In addition to the
	structure morphism  $B \to S$, we have
	a morphism  
	$\varphi \colon B \to \IA_k^1 = \Spec k[s]$, which exhibits
	 $ B $ as
	the degeneration of $S$ to the weighted normal cone of
	$\bar J  $. Using Equation~\eqref{eq:presentation}, we have: 
\begin{itemize}
	\item 
	The fiber of $ \varphi $ over $s=1$ is isomorphic to $S$ 
	since
	$x_1=x_1', x_2=x_2'$. 
	
	\item 
	The fiber of $ \varphi $ over $s=0$ lies over the point $(x_1,x_2) = (0,0)$
and it is isomorphic to $\Spec k[x_1',x_2']$ 
	with the natural action of $\IG_m$ with weights $w_i$ on $x_i'$, making it the weighted normal cone 
	at $ (0,0) \in S $.
\end{itemize}

Observe that we have the following action of $\IG_m = \Spec k[t,t^{-1}]$
on $B$
\begin{equation}
	\label{eq:action}
	t \cdot (s,x_1',x_2') \quad = \quad (t^{-1} s, t^{w_1} x_1', t^{w_2} x_2') 
\end{equation} 
stabilizing
$V(\oplus_{n>0} \cI_n) = V(x_1', x_2')$.
\end{Constr}

\begin{Def}[Weighted blow-up]\label{Def:weighted}
	Let $ B = \Spec_S(\tilde R)$ 
	 be as in Construction~\ref{Constr:Degen}.
	The {\em weighted blow-up of $S$ along $\bar J$} is defined as the Artin stack
	\[ 
		\pi \colon \operatorname{Bl}_{\bar J} (S) := [B_+/\IG_m] \to S,
	\]
	where 
	$ B_+ := B \smallsetminus V(x_1', x_2') $, 
		with natural $\IG_m$-action determined by \eqref{eq:action} and
	the morphism $ \pi $ is induced by the structure morphism $ B \to S $.
\end{Def}

Observe that $ \operatorname{Bl}_{\bar J} (S)  $ is covered by two charts, namely where $ x_1' $ (resp.~$ x_2' $) is invertible in which case we also write $ x_1' \neq 0 $ (resp.~$ x_2' \neq 0 $).

\medskip

By \cite[Lemma~1.1.2 and Remark~3.2.10]{Quek-Rydh} the weighted blow-up fulfills the following properties.
Note that this proves Theorem~\ref{Thm:Main1}\eqref{It:birational}.

\begin{Lem}\label{Lem:weighted}
	Let 
	$ \pi \colon \operatorname{Bl}_{\bar J} (S) = [B_+/\IG_m] \to S $	be the weighted blow-up of $S$ along the reduced center $\bar J = (x_1^{1/w_1}, x_2^{1/w_2})$
	coming from $ J = (x_1^{a_1},x_2^{a_2}) $.
	Then:
	\begin{enumerate}
		\item 
		All stabilizers of the Artin stack  
		$ \operatorname{Bl}_{\bar J} (S) $ are 
		naturally subgroups of $\IG_m$.
		In particular, 
		$ \operatorname{Bl}_{\bar J} (S) $ is a {\em tame} Artin stack.
		
		\item 
		The dimension of the quotient 
		$ \operatorname{Bl}_{\bar J} (S) $ is two
		and 
		the morphism
		$ \pi $ is an isomorphism outside the exceptional divisor, which is defined on the smooth covering of $ B_+ $ by $ s = 0 $.
	\end{enumerate}

	In particular, the weighted blow-up $ \pi $ is birational.  
	
	\begin{enumerate}
		\item[(3)]
		At the point $s = x_1'=0$ the stabilizer group is $\Gmu_{w_2}$; 
		while at the point $s= x_2'=0$ the stabilizer group is $\Gmu_{w_1}$.
		Here, $ \Gmu_m $ denotes the group of $ m $-th roots of unity for $ m \in \IZ_+ $.
	\end{enumerate}
\end{Lem}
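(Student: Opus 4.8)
The plan is to read all three parts off the explicit local presentation \eqref{eq:presentation} of the extended Rees algebra $\tilde R$ and the explicit $\IG_m$-action \eqref{eq:action}, following \cite[Lemma~1.1.2 and Remark~3.2.10]{Quek-Rydh}. First I would set up the two standard charts of $B_+ = B \smallsetminus V(x_1',x_2')$: the chart $U_1 = \{x_1' \neq 0\}$ and the chart $U_2 = \{x_2' \neq 0\}$. On $U_1$ one may normalize $x_1' = 1$ using the $\IG_m$-action, so that $U_1/\IG_m$ is cut out, via the relations $x_1 = s^{w_1}$, $x_2 = s^{w_2} x_2'$, with the residual action of $\Gmu_{w_1}$ (the stabilizer of $x_1'=1$, i.e.\ $t^{w_1}=1$); symmetrically on $U_2$ the residual group is $\Gmu_{w_2}$. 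Since $\gcd(w_1,w_2)=1$, on the overlap $\{x_1' \neq 0\} \cap \{x_2' \neq 0\}$ the action is free, so the generic stabilizer is trivial and $\pi$ is an isomorphism there.

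For part~(1), I would observe that the stabilizer of any point of $B_+$ is by construction a subgroup of the structure group $\IG_m$ acting via \eqref{eq:action}; such a subgroup is either all of $\IG_m$ or a finite group $\Gmu_m$ of roots of unity. The case $\IG_m$ is excluded on $B_+$: a point fixed by all of $\IG_m$ would need $w_1 = w_2 = 0$ or $x_1' = x_2' = 0$, and the latter locus is removed from $B_+$. Hence all stabilizers are finite cyclic, and since their orders divide $\ell = a_1 w_1 = a_2 w_2$ which is invertible in any residue field of positive characteristic dividing it only if\dots — more simply, the stabilizers $\Gmu_m$ are linearly reductive in every characteristic, which is exactly the definition of a tame Artin stack (see \cite[Lemma~1.1.2]{Quek-Rydh}). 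For part~(3), I would specialize: at $s = x_1' = 0$ we have $x_2' \neq 0$, so we are in the chart $U_2$ where normalizing $x_2' = 1$ leaves the residual group $\Gmu_{w_2}$; symmetrically at $s = x_2' = 0$ the stabilizer is $\Gmu_{w_1}$.

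For part~(2), the dimension statement follows because $B$ has dimension $\dim S + 1 = 3$ (one extra variable $s$, with two relations imposed on the three new variables $s, x_1', x_2'$ over $\cO$), so $B_+$ has dimension $3$ and the quotient stack $[B_+/\IG_m]$ has dimension $3 - 1 = 2$. That $\pi$ is an isomorphism away from the exceptional divisor $\{s = 0\}$ follows from Construction~\ref{Constr:Degen}: away from $V(x_1',x_2')$ and for $s$ invertible one may use the $\IG_m$-action to set $s = 1$, and then the relations give $x_1 = x_1'$, $x_2 = x_2'$, identifying that open locus with $S \smallsetminus \{q\}$ together with a free $\IG_m$; birationality is then immediate. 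The main obstacle I anticipate is nothing deep computationally — it is making the chart-by-chart stabilizer bookkeeping clean and making sure the coprimality $\gcd(w_1,w_2)=1$ is invoked correctly to get triviality of the generic stabilizer; but since all of this is precisely the content of \cite[Lemma~1.1.2 and Remark~3.2.10]{Quek-Rydh} specialized to the two-variable center $\bar J$, the proof is essentially a citation together with the explicit presentation already recorded in \eqref{eq:presentation} and \eqref{eq:action}.
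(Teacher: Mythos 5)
Your proposal is correct and matches the paper's treatment: the paper offers no argument beyond the citation to \cite[Lemma~1.1.2 and Remark~3.2.10]{Quek-Rydh}, and your chart-by-chart computation from \eqref{eq:presentation} and \eqref{eq:action} (stabilizer $\subseteq \Gmu_{w_i}$ wherever $x_i'$ is invertible, $\Gmu_m$ linearly reductive hence tame, dimension count $3-1=2$, and normalizing $s=1$ off the exceptional divisor) is exactly the content of that reference specialized to two variables. The only blemish is the half-finished aside about the order of the stabilizers dividing $\ell$ being "invertible" --- rightly abandoned, since $\Gmu_m$ is linearly reductive even when $p \mid m$, which is all that tameness requires.
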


Let us next have a look at how an element $ f\in \cO $ transforms along a weighted blow-up.
Recall that we have given a singular point $ q $ on a
hypersurface $ C = V(f) $ 
in a regular $ 2 $-dimensional scheme which is essentially of finite type over a field $ k $.

Consider an expansion of $ f $ as in Equation~\eqref{eq:expansion}, 
$ f = f_{J,1}(x_1,x_2) + \widetilde h(x_1,x_2) $.
Recall that $ v_{\bar J} (f) = v_{\bar J} (f_{J,1}) = \ell < v_{\bar J} (\widetilde h) $ where $ \ell \in \IZ_+ $ is the integer introduced in Definition~\ref{Def:reduced_center}.
Using the presentation of $ \tilde R $ given in Equation~\eqref{eq:presentation},
the pullback of $ f $ to $ B$ is
\[
	f(x_1,x_2) 
	\ = \  f(s^{w_1} x_1',  s^{w_2} x_2') 
	\ = \  s^\ell f_{J,1}(x_1',x_2') + s^{\ell+1} \widetilde h' (s, x_1',x_2') 
\]
where $ \widetilde h' (s, x_1',x_2') $ is defined through
	$  \widetilde h(s^{w_1} x_1',  s^{w_2} x_2') :=
	s^{\ell+1}\widetilde h' (s, x_1',x_2') $.

\begin{Def}\label{Def:proper}
	The \emph{proper transform of $ f $ 
	on $B$} is defined as 
	the element $ f' $ that we obtain after factoring $ s^\ell $ from the pullback of $ f $ to $ B $,
	\[ 
		f'(s, x_1',x_2') 
		\ := \ 
		s^{-\ell} f(s^{w_1} x_1',  s^{w_2} x_2')  
		\ = \ 
		f_{J,1}(x_1',x_2') + s \, \widetilde h' (s, x_1',x_2').
	\]
\end{Def}

\begin{Ex}
		Consider the basic example of the cuspidal curve $f = x_1^2+x_2^3$ on the affine plane $\IA^2_K$. By Theorem~\ref{Thm:J_unique}, our designated center is $J = (x_1^2,x_2^3)$. As explained in Definition~\ref{Def:reduced_center} and Equation~\eqref{eq:extended-rees-algebra}, this leads to the weighted blow-up of $\IA^2_K$ along $\bar J = (x_1^{1/3},x_2^{1/2})$ (see also Example~\ref{Ex:cusp}). By Equation~\eqref{eq:presentation}, the coordinates $x_1$ and $x_2$ transform to $x_1 = s^3 x_1'$ and $x_2 = s^2 x_2'$, where $s$ is the exceptional variable. Notice that $f = f_{J,1}$. As a consequence, $f$ transforms to $s^6 (x_1'^2 + x_2'^3)$, so the proper transform of $f$ under the weighted blow-up is $f' = x_1'^2+x_2'^3$, which has no singularities since we removed the locus $V(\bigoplus_{n > 0}\cI_n) = V(x_1',x_2')$.
\end{Ex}

\begin{Ex}\label{Ex:one} 
	Consider again  Example~\ref{Ex:1_2_3}\eqref{Ex:2_localize},
	and assume for simplicity that $p\nmid m$. We have determined $\nu = 2p$ and $\delta = m/p$, giving $(a_1,a_2) = (2p,2m)$ and, keeping $(x_1,x_2) = (y,x)$ we have $J = \left(y^{2p}, x^{2m}\right)$. We now write $ f = f_{J,1}(y,x) + \widetilde h(y,x) $ with
	 \[ 
		 	f_{J,1}(y,x) \ = \ y^{2p} + x^m y^p z^{m+p} + x^{2m} z^{p+1} \qquad\text{and}\qquad \widetilde h(y,x) \ = \ x^{2m-1} y.
		 \]
		  Since $p\nmid m$ we have $\ell = 2mp$. We rescale to obtain $\bar J = \left(y^{1/m}, x^{1/p}\right)$, so that $x = s^px', y=s^my',$ giving
	 \[ 
		 	f'(y',x') \quad = \quad y'^{2p} + x'^m y'^p z^{m+p} + x'^{2m} z^{p+1} \quad + \quad  s^{m-p}x'^{2m-1} y'.
		 \]
		 The argument in the Section \ref{Sec:Proofs} below  guarantees  that its order along the exceptional locus $s=0$ is $<2p$. This follows from the fact that the  term $y'^{2p} + x'^m y'^p z^{m+p} + x'^{2m} z^{p+1}$ is not of the form $g^{2p}$
		and that $ x' $ or $ y' $ is invertible 
		 	(and also remember that $ z $ is invertible). 
\end{Ex}

\section{Proofs}
\label{Sec:Proofs}

This section starts by proving Theorem~\ref{Thm:Main1}\eqref{It:order-drops}  and Theorem~\ref{Thm:Main2}. Section~\ref{Sec:destack} is devoted to destackification.
We shall use Corollary~\ref{Cor:leading-term} to deduce that the invariant on any fiber of $ \varphi|_{B_+} \colon B_+ \to \IA_\K^1$ (see Construction~\ref{Constr:Degen}) drops strictly. In fact, we will show the stronger statement that the order decreases strictly after the designated weighted blow-up.

\begin{proof}[The logarithmic order drops: Proof of Theorem \ref{Thm:Main1}\eqref{It:order-drops}]
Since the logarithmic order with respect to the exceptional divisor $\{s=0\}$  
is functorial for smooth morphisms, showing that the logarithmic order drops on $B_+$ implies that it drops on the quotient $\Bl_{\bar J}(S) = [B_+/\IG_m]$. 
Note that $\nu_{\mm}(f) \leq \nu^{\log}_{\mm}(f)$, where the logarithmic order $\nu^{\log}_\mm (f)$ 
is defined: it is the order where $s\neq 0$ and otherwise it is the order of the restriction to $  V(s)  $, namely the order of $f_{J,1}(x_1',x_2')$.

When $s\neq 0$ we simply are computing the order of $f$ away from $ q = V(x_1, x_2) $. 
We note that near $ q $ this order is $<\nu=a_1$ since $f$ is not a pure $\nu$-th power.

When $s=0$ we are computing the order of $f_{J,1}(x_1',x_2')$ away from $ V(x_1',x_2') $. 
But once again, $f_{J,1}(x_1',x_2') = x_1'^{\nu} + \cdots$ is not a pure $\nu$-th power, see Corollary \ref{Cor:leading-term}. 
Thus its order of vanishing at any point on this locus is $<\nu=a_1$, as needed.
\end{proof}

\medskip

Using this result, we can now prove its equivariant generalization.

\begin{proof}[Proof of Theorem~\ref{Thm:Main2}]
Consider the localizations of $\cZ \subset \cW$ at the generic point $q:=\eta_\cQ$ of $\cQ$, namely $C := \Spec(\cO_{\cZ,q})$ and $S := \Spec(\cO_{\cW,q})$. Then $S$ is a 2-dimensional regular scheme that is essentially of finite type over $k$, and $C$ is cut out by a local equation $\{f=0\}$ in $S$. Note that the residue field of the closed point $q \in S$ is the function field $K(\cQ)$ 
and hence it is not necessarily perfect even if $ k $ was assumed to be perfect.

We apply part (1) of Theorem~\ref{Thm:Main1} to $C \subset S$ at $q$. There is a unique center $J=(x_1^{a_{1}}, x_2^{a_{2}})$ of maximal invariant $(a_{1},a_{2})$ that is admissible for $C \subset S$ at $q$, where $a_{2} = a_{1}\delta$, with $\delta$ the vertex of the characteristic polyhedron of $f$. 
Thinking of $J$ as a Rees algebra (see Remark~\ref{Rk:weighted-blow-ups}(4)), let $q \in U \subset \cQ$ be the largest open subset on which $J = (x_1^{a_{1}}, x_2^{a_{2}})$ extends.
In what follows, we show $U = \cQ$: 
\begin{enumerate}
    \item[(i)] Letting $k^\sep$ denote the separable closure of $k$, we claim $U$ is stable under $T(k^\sep)$. Indeed, $J$ remains the unique center of maximal invariant $(a_1,a_2)$ that is admissible for $C \times_{\Spec(k)} \Spec(k^{\sep})  \subset S \times_{\Spec(k)} \Spec(k^{\sep}) $ 
    at $q$. On the other hand, for every $t \in T(k^\sep)$, $t \cdot J$ is also a center on $S \times_{\Spec(k)} \Spec(k^{\sep}) $ that satisfies the same properties, so by the uniqueness of $J$, $t \cdot J = J$. Thus, $J = t \cdot J$ extends to $t \cdot U \subset \cQ$, so $t \cdot U \subset U$.

    \item[(ii)]We claim that in fact $U$ is stable under $T$. To do so, we instead show that $F = \cQ \smallsetminus U$ is stable under $T$, or equivalently, that the preimage of $F$ under the $T$-action $\alpha \colon T \times \cQ \to \cQ$ contains $T \times F$. By (i), $\alpha^{-1}(F) \supset \{t\} \times F$ for every $t \in T(k^\sep)$. Since $T$ is integral and geometrically reduced, the points in $T(k^\sep)$ are dense in $T$. Thus, after taking closures, we deduce $\alpha^{-1}(F) \supset T \times F$ as desired.
 
\end{enumerate}
Thus, $U \subset \cQ$ are both non-empty $T$-orbits, so $U = \cQ$. Hence, $J$ extends to $\cQ$, and moreover by (i), that extension to $\cQ$ is $T$-invariant.

Next, we claim that $J=(x_1^{a_{1}}, x_2^{a_{2}})$ is the unique $T$-invariant center of maximal invariant that is admissible for $\cZ \subset \cW$ along $\cQ$.
Note that a $ T $-invariant center is necessarily defined by at most 2 local parameters at all points.
To see the claim, any other such center $J'$ necessarily of invariant $(a_1,a_2)$ would localize to a center of maximal invariant that is admissible for $C \subset S$ at $q$. By the uniqueness of $J$ in Theorem~\ref{Thm:Main1}, $J$ agrees with $J'$ at $q$, and hence on a non-empty open subset of $\cQ$. Since $\cQ$ is an orbit under the $T$-action on $\cW$, and both $J$ and $J'$ are $T$-invariant, we deduce that $J = J'$. This completes the proof of part (1) of Theorem~\ref{Thm:Main2}.

We now prove the remaining parts of Theorem~\ref{Thm:Main2}. Following the notation introduced in Section~\ref{Sec:weighted}, consider the weighted blow-up of $[B_+/\Gm] \to \cW$ along $\bar J = (x_1^{1/w_1},x_2^{1/w_2})$. Since $B_+ \subset B = \Spec(\tilde{R})$ with $\cO_\cW \subset \tilde{R} \subset \cO_\cW[t,t^{-1}]$, the co-action $\cO_\cW \to \cO_\cW \otimes \cO_T$ naturally lifts to a co-action $\tilde{R} \to \tilde{R} \otimes \cO_T$ by sending $t \mapsto t \otimes 1$. In this way, the $T$-action on $\cW$ lifts to an action of $T' = T \times \Gm$ on $B_+$, inducing a morphism $[B_+/T'] \to [\cW/T]$ fitting in a cartesian diagram: \[
    \begin{tikzcd}
        {[B_+/\Gm]} \arrow[to=1-2] \arrow[to=2-1] & {[B_+/T']} \arrow[to=2-2] \\
        \cW \arrow[to=2-2]& {[\cW/T]}
    \end{tikzcd}
\]
Since $\cW \to [\cW/T]$ is a smooth cover, and $[B_+/\Gm] \to \cW$ is proper and birational by Lemma~\ref{Lem:weighted}, $[B_+/T'] \to [\cW/T]$ is also proper and birational. In order to show $[B_+/T']$ has finite stabilizers, it suffices to show its relative inertia over $[\cW/T]$ is finite, since by hypothesis $[\cW/T]$ has finite stabilizers. Indeed, that is the case, as the relative inertia of $[B_+/T']$ over $[\cW/T]$ is finite after base change to the smooth cover $\cW \to [\cW/T]$.

Finally, by Theorem~\ref{Thm:Main1}(3), the logarithmic order of $\cZ' \subset \cW' := B_+$ at every point lying over $q$ is strictly smaller than $a_1$. Since every point lying over $\cQ$ is in the same $T'$-orbit as points lying over $q$, we deduce the same conclusion for points in $\cZ'$ lying over $\cQ$.
\end{proof}

\subsection{From smooth tame stacks to smooth schemes}\label{Sec:destack}

A stack-theoretic resolution of singularities suffices for some applications, but not for others. It is therefore important to transform a stack-theoretic resolution to a scheme-theoretic resolution.  It is particularly desirable to keep such a process concrete and combinatorial, not requiring the depths of resolution of singularities.

One notes that the coarse moduli space $S$ of a smooth tame Artin stack $\cS$ with finite diagonal has toroidal singularities; in our case they are, moreover, quotient singularities by abelian group-schemes. There are several ways to resolve such singularities in a concrete and combinatorial way. We recall one here.

To keep things simple we concentrate on the case at hand of a smooth hypersurface in a smooth ambient stack with trivial generic stabilizers. 
In particular they are of finite type over a field.

\begin{Thm}[{Bergh \cite[Theorem 1.1]{Bergh}, see also Rosenblad \cite{Rosenblad}}]\label{Th:destack} Let $\cC_0 \subset \cS_0$ be a smooth hypersurface in a smooth, tame Artin stack with finite diagonal and abelian inertia, having coarse moduli spaces $C_0\subset S_0$. Let $\cU \subset \cS_0$ be the open locus where inertia is trivial, which we assume dense.

There is  a modification $\cS_1 \to \cS_0$, so that, writing  $\cC_1 \subset \cS_1$ for the proper transform of $\cC_0$, 
\begin{itemize}
\item $\cS_1$ and $\cC_1$ are smooth,
\item $\cS_1 \to \cS_0$ is an isomorphism along $\cU$, and
\item  The coarse moduli spaces $S_1$ of $\cS_1$ and $C_1$ of $\cC_1$ are also smooth, and are projective over $S_0$ and $C_0$.
\end{itemize}
In particular $C_1 \to C_0$ and $S_1 \to S_0$ are scheme-theoretic resolutions of singularities.
\end{Thm}

We emphasize that Bergh's theorem results in an \emph{algorithm}, which is indeed concrete and combinatorial. It uses only standard, unweighted blow-ups and root constructions. It has been implemented by a group of undergraduates \cite{destackification1,destackification2}. The original algorithm in Bergh's paper is highly inefficient, but Rosenblad's paper \cite{Rosenblad} makes it significantly more efficient using weighted blow-ups.

As a consequence, we obtain a concrete, combinatorial, and computer-implemented upgrade of Theorem \ref{Thm:resolution} to the well-known scheme-theoretic resolution of plane curves:

\begin{Cor}[{See \cite[Chapter 3]{Cutkosky}, \cite[Chapter 1]{Kollar}}] \label{Cor:coarse-resolution}
Let $C \subset S$ be a hypersurface in a regular $ 2 $-dimensional scheme $S$ which is essentially of finite type over a field $ k $.
 Then there is a projective morphism $S_1 \to S$, with $S_1$ smooth, so that the proper transform $C_1$ of $C$ is also smooth.
 	\end{Cor}

The following diagram indicates the processes involved:

$$\xymatrix{
\cC_1 \subset \cS_1 \ar[rrrrr]^{\text{Destackification,}}_{\text{Theorem \ref{Th:destack}}} \ar[dd]|-{\text{Coarse moduli space}} &&&&&
  \cC_0\subset \cS_0\ar@/^2em/[rrrrrrdd]|-{\text{{\qquad \qquad \    Weighted resolution, Theorem  \ref{Thm:resolution}}}}	\ar[dd]|-{\text{coarse moduli space}}  \\ \\
  C_1 \subset S_1 \ar[rrrrr]^{\text{Resolution of quotient singularities}}_{\text{resulting from Theorem \ref{Th:destack}}} \ar@/_3em/[rrrrrrrrrrr]_{\text{Resolution, Corollary \ref{Cor:coarse-resolution}}}&&&&&   C_0\subset S_0\ar[rrrrrr]^{\text{Resolution up to quotient singularities}}_{\text{resulting from Theorem \ref{Thm:resolution}}} &&&&&& C \subset S\\ \\
  }$$

\section{Comparison of approaches}\label{Sec:comparison}
We end this paper by comparing different methods of resolving plane curve singularities for explicit
examples.

\begin{Ex}
	\label{Ex:compare1}
	We consider the plane curve given by the polynomial 
	\[  
		f(x,y)  \, = \, y^4 + x^3  \ \in \  K[x,y]   
	\] 
	from different perspectives, where $ K $ is a field. 
	\addtocounter{subsection}{1}
	\subsubsection{Newton's method} First, suppose $ \car(K) = 0 $.
	A first method to resolve the singularities is {\em Newton's algorithm} to determine a Puiseux expansion for the given \emph{unibranch} curve.
	The algorithm is described in \cite[Section~2.1]{Cutkosky} in detail
	and involves transforms similar to those of the weighted blow-up. 
	For the given example, a possible Puiseux expansion is $ x = -y^{4/3} $.
	This defines an inclusion 
	$   R := K [[x,y]]/ (f(x,y)) \to K [[t]] $ via $ (x,y) \mapsto (-t^4 , t^3 ) $
	for which source and target have the same quotient field (see \cite[beginning of Section~2.5]{Cutkosky}).
	In particular, $ \Spec (K [[t]]) \to \Spec (R) $ is a local, nonembedded desingularization. 
	Let us point out that this method fails to work in positive characteristic in general, see \cite[Remark~2.2 and Exercise~2.4.2]{Cutkosky}.
	In general, Puiseux expansions do not exist in positive characteristic and one has to work with the more technical notion of Hamburger-Noether expansions, see \cite[Charpter~II]{Campillo1} or \cite{Campillo2}. 
	A comparison with that work would be interesting but would go too far afield.

	\subsubsection{Classical blow-ups} Next, let us determine a resolution via usual, non-weighted, blow-ups. 
	Here, $ K $ can be any field.
	First we have to blow up the origin. 
	We write the 
	usual 
	blow-up in terms of its degeneration to the normal cone, see Construction \ref{Constr:Degen}. (To obtain the classical charts, set $x'=1$ with $s=x$ for one chart, and set $y'=1$ with $ s=y$ for the other.)
	By \eqref{eq:presentation}, the coordinate transform as 
	$ (x,y) = ( s x', s y' ) $, 
	where $ s $ is the exceptional variable,
	and we have to consider points away from $ V(x',y') $.
	The proper transform of $ f $ is 
	$ f' = s y'^4 + x'^3 $.
	We observe that $ V(f') $ is already regular away from $V(x',y') $.
	Of course, this is an accident, as many more steps are needed in most other examples, see Example~\ref{Exbu-wbu}. We do not pursue logarithmic resolution here, though it is part of the process in  \ref{Ex:toric}  and happens automatically in this example in \ref{Ex:weighted} , \ref{Ex:multi-weighted}.
	
	\subsubsection{Toric methods}\label{Ex:toric} Since the original $ f $ is a binomial, the singularities of $ V(f) $ can also be resolved using {\em toric methods}.
	Still, $ K $ can be any field.
	(This is also the resolution process following \cite{ACampoOka} for this example.)
	For this, one has to describe a subdivision of the fan determined by the positive orthant $ \IR_{\geq 0}^2 $. 
	The Newton polyhedron, 
	which is determined by the exponents of the appearing monomials, 
	has a single compact edge with vertices $ (0,4) $ and $ (3,0) $.
	The normal vectors of the Newton polyhedron $ \binom{1}{0}, \binom{4}{3}, \binom{0}{1} $ determine a first subdivison of $ \IR_{\geq 0}^2 $.
	This already defines a toric modification, 
	but since there are neighboring rays whose determinant is not invertible in $ \IZ $,
	the resulting ambient space is a singular toric variety.
	Hence, a further refinement is needed. 
	This leads to the fan whose set of rays is   
	\[ 
	\left\{ 
	\binom{1}{0}, \ \binom{2}{1}, \ \binom{3}{2}, \ \binom{4}{3}, \ \binom{1}{1}, \ \binom{0}{1}
	\right\}
	.
	\]
	The corresponding fan looks as follows
	where the minimal generators of the rays are marked as thick vectors:
	\[
		\begin{tikzpicture}[scale=0.9]
			
			\path[pattern=north east lines, pattern color=black!20!white,dashed] 
			(4.5,0)--(0,0)--(4.5,2.25);
			
			\path[pattern=horizontal lines, pattern color=black!20!white,dashed] 
			(4.5,2.25)--(0,0)--(4.5,3);
			
			\path[pattern=vertical lines, pattern color=black!20!white,dashed] 
			(4.5,3)--(0,0)--(4.5,3.375);
			
			\path[pattern=north west lines, pattern color=black!20!white,dashed] 
			(4.5,3.375)--(0,0)--(4.5,4.5);
			
			\path[pattern=dots, pattern color=black!20!white,dashed] 
			(4.5,4.5)--(0,0)--(0,4.5);
			
			\foreach \i in {1,...,4} {
				\draw [lightgray] (\i,0) -- (\i,4.5);
				\draw [lightgray] (0,\i) -- (4.5,\i);
			}

			\draw (4.5,0)--(0,0)--(0,4.5);
			\draw (4.5,2.25)--(0,0)--(4.5,3);
			\draw (4.5,3.375)--(0,0)--(4.5,4.5);
			
			\draw[<->, ultra thick] (1,0)--(0,0)--(0,1);
			\draw[<->, ultra thick] (2,1)--(0,0)--(3,2);
			\draw[<->, ultra thick] (4,3)--(0,0)--(1,1);

		\end{tikzpicture} 
	\]
	The charts correspond to the five maximal cones. It can be verified on each of them that the strict transform is regular and either disjoint from or transvesral to the exceptional divisor.
	For example, for the cone $ \sigma =\left\langle  \binom{1}{1}, \binom{4}{3} \right\rangle$ with $\sigma^\vee = \left\langle   \binom{1}{-1}, \binom{-3}{4} \right\rangle,$
	the coordinates transform is $(x_\sigma,y_\sigma) = (xy^{-1},x^{-3}y^4)$ giving $(x,y) = ( x_\sigma^4 y_\sigma, \, x_\sigma^3 y_\sigma ), $
	and thus 
	\[  
		f \, = \, 
		y^4 + x^3 
		\, = \, 
		x_\sigma^{12} y_\sigma^{4} + x_\sigma^{12} y_\sigma^3
		\, = \, 
		 x_\sigma^{12} y_\sigma^3 ( y_\sigma +  1 )
		 .
	\]
	Note that the strict transform of $ f $ is $ y_\sigma + 1 $ and the exceptional divisors are $ V( x_\sigma) $ and $ V(y_\sigma)  $.

	In \cite{GGP}, a different perspective on the construction of the subdivision using the notion of lotuses is given. 
	Lotuses are combinatorial objects encoding much information on the curve singularity.
	Since a deeper discussion would lead too far, we only refer to the original article for details.

	\subsubsection{Weighted blow-ups}\label{Ex:weighted} The reduced center for the {\em weighted blow-up} of $ f = y^4 + x^3 $ is $ \bar{J} = (x^{1/w_1}, y^{1/w_2} )= (x^{1/4}, y^{1/3} ) $ in any characteristic.
	By \eqref{eq:presentation}, the coordinate transformation is 
	$ (x,y) = ( s^4 x', s^3 y' )$.
	This provides
	$ 
		f = s^{12} y'^4 + s^{12} x'^3 = s^{12} (y'^4 + x'^3)
	$. 
	The exceptional divisor is $ V (s)  $ and the proper transform $ f' = y'^4 + x'^3 $ is regular as we have to remove $ V (x',y') $.
	Therefore, the singularities are already resolved.
	The following picture shows the stacky fan, see \cite[Definition 2.4]{Toric1},
	corresponding to the weighted blow-up, with the stacky lattice $L\subset \IZ^2$ marked by solid red points:
	\[
	\begin{tikzpicture}[scale=0.6]		
		
		\foreach \i in {1,...,9} {
			\draw [lightgray] (\i,0) -- (\i,6.5);
		}
		\foreach \i in {1,...,6} {
			\draw [lightgray] (0,\i) -- (9.5,\i);
		}
		
		\draw[thick] (9.5,0)--(0,0)--(0,6.5);

		\draw (0,0)--(8.4,6.3);
		
		\foreach \i in {0,...,9}
		\foreach \j in {0,...,6} {
			\filldraw[black] (\i,\j) circle (1.3mm);
			\filldraw[white] (\i,\j) circle (1.1mm);
		}
		
		\filldraw[red] (4,3) circle (1.5mm);
		\filldraw[red] (8,6) circle (1.5mm);

		\foreach \i in {0,...,9} {
			\filldraw[red] (\i,0) circle (1.5mm);
		}
		
		\foreach \i in {5,...,9} {
			\filldraw[red] (\i,3) circle (1.5mm);
		}
		
		
		\foreach \i in {1,...,6} {
			\filldraw[red] (0,\i) circle (1.5mm);
		}

		\foreach \i in {4,...,6} {
			\filldraw[red] (4,\i) circle (1.5mm);
		}

		\draw[<->, ultra thick] (1,0)--(0,0)--(0,1);
		\draw[->, ultra thick] (0,0)--(4,3);

	\end{tikzpicture} 
	\]
	
	The underlying fan, with  red fillings ignored,  is the fan of the coarse moduli space. The coarse moduli space can be obtained directly by blowing up the ideal $(x^3,y^4)$ and normalizing,  see \cite[Definition 2.5 and Remark 3.11]{AQ}.  The red filled lattice points indicate, in each cone, the toric  chart of a toric abelian finite cover whose stack-theoretic quotient gives an open chart of the stack-theoretic blow-up.
	
	\subsubsection{Multi-weighted blow-ups}\label{Ex:multi-weighted} Finally, let us illustrate how \cite{AQS-plane} treats this singularity. 
	Assume that $ \car(K) = 3 $.
	Since $ \car (K) $ divides $ w_2 = 3 $, 
	the ambient space resulting from the weighted blow-up is an Artin stack, but not a Deligne--Mumford stack, as explained in the introduction of \cite{AQS-plane}.
	In loc.~cit., 
	a resolution via Deligne--Mumford stack is constructed 
	using multi-weighted blow-ups.
	Explicitly, for the given example, the fan determined by the normal vectors of the Newton polyhedron has to be refined by introducing the new ray $ \binom{\kappa}{1} $ 
	where $ \kappa := \left\lceil {w_1}/{w_2} \right\rceil = 2 $.  
	The coordinate transform for the corresponding multi-weighted blow-up is 
	\[
		(x,y)  \ =  \ (s_1^4 s_2^2  x'', \, s_1^3  s_2 y'')
	\]
	so that
	\[
		 f
		 \,  = \, 
		 y^4 + x^3 
		 \, = \, 
		 s_1^{12}  s_2^4 y''^4 + s_1^{12} s_2^6  x''^3
		 \, = \, 
		 s_1^{12}  s_2^4  (y''^4 + s_2^2 x''^3 ) .
	\]	
	The exceptional divisors are $ V (s_1)  $ and $ V (s_2)  $
	and the proper transform of $ f $ is $ f'' := y''^4 + s_2^2 x''^3  $.
	Here, we have to remove	$ V ( s_2 x'' , x'' y'' , s_1 y'' )  $ 
	-- for the details why this locus has to be removed, we refer to \cite[Definition 4.1, Remarks~4.2 and 4.3]{Toric1}
	or \cite[Definition~2.1, Remark~2.2]{AQ}. 
	In particular, we see that the proper transform $ f'' $ is regular and transveral to the exceptional locus.
	The stacky fan corresponding to the multi-weighted blow-up looks as follows: 
	\[
\begin{tikzpicture}[scale=0.6]		
	
	\foreach \i in {1,...,9} {
		\draw [lightgray] (\i,0) -- (\i,6.5);
	}
	\foreach \i in {1,...,6} {
		\draw [lightgray] (0,\i) -- (9.5,\i);
	}
	
	\draw[thick] (9.5,0)--(0,0)--(0,6.5);
	
	\draw (9.4,4.7)--(0,0)--(8.4,6.3);
	
	\foreach \i in {0,...,9}
	\foreach \j in {0,...,6} {
		\filldraw[black] (\i,\j) circle (1.3mm);
		\filldraw[white] (\i,\j) circle (1.1mm);
	}
	
	\filldraw[red] (4,3) circle (1.5mm);
	\filldraw[red] (6,4) circle (1.5mm);
	\filldraw[red] (8,5) circle (1.5mm);
	\filldraw[red] (8,6) circle (1.5mm);

	\foreach \i in {0,...,9} {
		\filldraw[red] (\i,0) circle (1.5mm);
	}
	
	\foreach \i in {0,...,9} {
		\filldraw[red] (\i,0) circle (1.5mm);
	}
	\foreach \i in {2,...,9} {
		\filldraw[red] (\i,1) circle (1.5mm);
	}
	\foreach \i in {4,...,9} {
		\filldraw[red] (\i,2) circle (1.5mm);
	}
	\foreach \i in {6,...,9} {
		\filldraw[red] (\i,3) circle (1.5mm);
	}
	\foreach \i in {8,...,9} {
		\filldraw[red] (\i,4) circle (1.5mm);
	}

	\foreach \i in {1,...,6} {
		\filldraw[red] (0,\i) circle (1.5mm);
	}
	
	\foreach \i in {4,...,6} {
		\filldraw[red] (4,\i) circle (1.5mm);
	}

	\draw[<->, ultra thick] (1,0)--(0,0)--(0,1);
	\draw[<->, ultra thick] (2,1)--(0,0)--(4,3);

\end{tikzpicture} 
\]

\subsubsection{Destackifying the multi-weighted and weighted blow-ups}  The stacky fan of Section \ref{Ex:multi-weighted} above provides a relatively short example of destackification. We point out that destackification is often a lengthy operation, and Bergh's version \cite{Bergh} is longer than Rosenblad's \cite{Rosenblad}.
For toric surfaces one can show from first principles that the resulting resolution is at least as long as the one obtained by classical toric methods, and any toric resolution of the coarse moduli space is  the coarse moduli space of a destackification.

In this particular case, all methods lead to the fan of Section \ref{Ex:multi-weighted} given by the rays $\left\{ 
	\binom{1}{0},  \binom{2}{1},  \binom{3}{2},  \binom{4}{3},  \binom{1}{1},  \binom{0}{1}
	\right\}$, which is the minimal possible fan.
\begin{itemize}
\item 	
The cone 	$\left\langle 
	\binom{1}{0}, \binom{2}{1}\right\rangle$ is already regular, so no operations are needed.
\item  The cone $\left\langle 	\binom{2}{1},  \binom{4}{3}\right\rangle$ has determinant 2. One performs the standard blow-up of the stack at the barycentric generator $\binom{2}{1} + \binom{4}{3} = \binom{6}{4} = 2\binom{3}{2}.$ The coarse moduli space thus has the new generator $\binom{3}{2}$ and the subdivision of the cone is regular.
\item  The cone $\left\langle 	\binom{4}{3}, \binom{0}{1} \right\rangle$ has determinant 4. One performs the standard blow-up of the stack at the barycentric generator $\binom{4}{4} = 4\binom{1}{1}.$ The coarse moduli space thus has the new generator $\binom{1}{1}$ and the subdivision of the cone is regular.
\end{itemize}
Here is the resulting stacky fan, showing that both the stack and its coarse moduli space are regular:
	\[
\begin{tikzpicture}[scale=0.6]		
	
	\foreach \i in {1,...,9} {
		\draw [lightgray] (\i,0) -- (\i,6.5);
	}
	\foreach \i in {1,...,6} {
		\draw [lightgray] (0,\i) -- (9.5,\i);
	}
	
	\draw[thick] (9.5,0)--(0,0)--(0,6.5);
	
	\draw (9.4,4.7)--(0,0)--(8.4,6.3);
	\draw (9,6)--(0,0)--(6,6);
	
	\foreach \i in {0,...,9}
	\foreach \j in {0,...,6} {
		\filldraw[black] (\i,\j) circle (1.3mm);
		\filldraw[white] (\i,\j) circle (1.1mm);
	}
	
	\filldraw[red] (4,3) circle (1.5mm);
	\filldraw[red] (6,4) circle (1.5mm);
	\filldraw[red] (8,5) circle (1.5mm);
	\filldraw[red] (8,6) circle (1.5mm);

	\foreach \i in {0,...,9} {
		\filldraw[red] (\i,0) circle (1.5mm);
	}
	
	\foreach \i in {0,...,9} {
		\filldraw[red] (\i,0) circle (1.5mm);
	}
	\foreach \i in {2,...,9} {
		\filldraw[red] (\i,1) circle (1.5mm);
	}
	\foreach \i in {4,...,9} {
		\filldraw[red] (\i,2) circle (1.5mm);
	}
	\foreach \i in {6,...,9} {
		\filldraw[red] (\i,3) circle (1.5mm);
	}
	\foreach \i in {8,...,9} {
		\filldraw[red] (\i,4) circle (1.5mm);
	}
	
	
	\foreach \i in {1,...,6} {
		\filldraw[red] (0,\i) circle (1.5mm);
	}
	
	\foreach \i in {4,...,6} {
		\filldraw[red] (4,\i) circle (1.5mm);
	}
	
	
	\draw[<->, ultra thick] (1,0)--(0,0)--(0,1);
	\draw[<->, ultra thick] (2,1)--(0,0)--(4,3);
	\draw[<->,  thick, red] (6,4)--(0,0)--(4,4);	
	\draw[<->,  thick] (3,2)--(0,0)--(1,1);	
	
\end{tikzpicture} 
\]

If one were to apply Bergh's algorithm to the \emph{weighted} blow-up in Section \ref{Ex:weighted}, a redundant ray $\binom{5}{3}$ would appear right away. Rosenblad provides a more efficient algorithm. Start with a \emph{weighted} stacky blow-up subdividing the cone $\left\langle \binom{1}{0},\binom{4}{3}\right\rangle$ at $\binom{4}{3}+2\binom{1}{0} = \binom{6}{3} = 3\binom{2}{1}$, and then proceed very similarly to the procedure above, with a weighted blow-up at $3\binom{4}{3}+\binom{6}{3} = \binom{18}{12} = 6\binom{3}{2}$, again recovering the toric fan of Section \ref{Ex:toric}. The margins here are too narrow for the required picture.

\end{Ex}

\begin{Ex}
	\label{Exbu-wbu}  
		Let us briefly outline an example, where the resolution via non-weighted blow-ups requires significantly more steps than with weighted blow-ups.
		
		Analogous to Example~\ref{Ex:compare2},
		the plane curve singularity 
		\[  
			V( y^{21} + x^{34}) 
		\]
		is resolved by the weighted blow-up with reduced center $ \bar{J} = (x^{1/21}, y^{1/34}) $. 
		On the other hand, it can be verified that six non-weighted blow-ups are needed to resolve the singularities. 
		We leave the details of the computation as an exercise to the reader. 
		
		The difference in numbers of blow-ups can be arbitrarily large,
		as the following generalization of the example shows.
		For $ n \in \IZ_+ $, $ n \gg 0 $, 
		consider the curve singularity 
		\[ 
			V(y^{F_{n+2}} + x^{F_{n+3}}) 
		\] 
		for two consecutive Fibonacci numbers $ F_{n+2}, F_{n+3} $.  
		Again, a single weighted blow-up resolves the singularities.
		The reader may verify that the number of non-weighted blow-ups required to resolve the singularity is $ n $.
		We point out that this example is inspired by the work \cite{Fibonacci} of Pe Pereira and Popescu-Pampu.
\end{Ex}

\begin{Ex}
	\label{Ex:compare2}
	In order to compare the weighted blow-ups with the desingularization via Tschirnhausen resolution tower by A'Campo and Oka \cite{ACampoOka},
	we have to study a slightly more complicated example. 
	Consider  
	\[
		g(x,y) = (y^4 + x^3)^6 + x^{17} y^3 
	\]
	over any base field $ K $ of characteristic zero. 
	As explained in \cite[Example~4.9]{ACampoOka},
	the singularities are resolved using their method as follows:
	First make the toric modification resolving the singularities of $ V ( y^4 + x^3 ) $
	(as discussed in Example~\ref{Ex:compare1}).
	Then in one of the charts, 
	the strict transform of $ f $ has the form $ v_1^6 + u_1^5 + (\mbox{higher terms}) $
	as explained in \cite[Example~4.9]{ACampoOka}.
	Hence, another toric modification 
	corresponding to the fan with set of rays
	\begin{equation} \label{Eq:AOfan}
		\left\{ 
		\binom{1}{0}, \ \binom{2}{1}, \ \binom{3}{2}, \ \binom{4}{3}, \ \binom{5}{4}, \ \binom{6}{5}, \ \binom{1}{1}, \ \binom{0}{1}
		\right\}
	\end{equation}
	has to be performed in order to resolve the singularities. 
	
	For the resolution with weighted blow-ups, 
	we first blow up with reduced center $ \bar{J} = (x^{1/w_1}, y^{1/w_2} )= (x^{1/4}, y^{1/3} ) $, as in Example~\ref{Ex:compare1}.
	Applying $ (x,y) = ( s^4 x', s^3 y' )$,
	we get 
	\[ 
		g \ = \ 
		s^{72} (y'^4 + x'^3)^6 + s^{77} x'^{17} y'^3
		\ = \
		s^{72} \left(  (y'^4 + x'^3)^6 + s^5 x'^{17} y'^3 \right)
		. 
	\]
	The proper transform is $ g' = (y'^4 + x'^3)^6 + s^5 x'^{17} y'^3 $.
	Observe that the order of $ g' $ is at most 6 at every point away from $ V ( x ' , y' ) $, 
	while the order of $ g $ at the origin is 18 before the blow-up. 
	
	Let us briefly outline how this resolution process ends:
	If $ y'^4 + x'^3  $ is invertible, then $  V (g')  $ is regular and transveral to the exceptional divisor $ V  (s)  $.
	Hence, look at the points where we have $ y'^4 + x'^3 = 0 $.
	Since we are away from $ V ( x' , y')  $, we can  
	introduce the new local coordinate $ z := y'^4 + x'^3 $.
	Notice that both $ x' $ and $ y' $ are invertible on $ V (z)  $.  
	Therefore, the proper transform can be written as 
	$ g' = z^6 + \lambda s^5 $ where $ \lambda := x'^{17} y'^3 $ is invertible. 
	Analogous to Example~\ref{Ex:compare1},
	the weighted blow-up with reduced center $ \bar J' := (z^{1/5}, s^{1/6}) $ will resolve the singularities. The stacky fan in these variables has generators 	$
		\left\{ 
		\binom{1}{0},  \binom{6}{5},  \binom{0}{1}
		\right\}
	$ and a minimal destackification will have the fan \eqref{Eq:AOfan} of A'Campo and Oka.

	Finally, let us briefly outline Teissier's approach through {\em overweight deformations} \cite{Teissier,GoldinTeissier}.
	Here, the goal is to construct a suitable re-embedding into a possibly higher-dimensional ambient space 
	such that the singularities can be resolved with a single toric modification in the new ambient space. 
	For the given example such a re-embedding is given by introducing a new variable $ z $ fulfilling the relation $ z = y^4 + x^3 $ so that $ V (f) $ is isomorphic to
	\[
		\left\{ \ 
		\begin{array}{rcl}
			y^4 + x^3 - z  & = &  0 	,
			\\
			z^6 + x^{17} y^3 & = & 0. 
		\end{array}
		\right.
	\]
	The latter is an overweight deformation of the binomial variety $ V ( y^4 + x^3 , \, z^6 + x^{17} y^3 ) $
	which means that there is a weight function $ W $ such that the binomials are weighted homogeneous with respect to $ W $,
	while the remaining terms (i.e., $ z $ in the first equation) are of higher weight. 
	Indeed, this is fulfilled if we set $ W (x) = 4, W(y) = 3, W (z) = 77/6 $. 
	The desingularization of $ V(f) $ can then be deduced from the one of the binomial variety.	
	More background on how the re-embedding is constructed and how the resolution is obtained can be found in \cite{Teissier,GoldinTeissier} or in \cite{PCQO} where irreducible quasi-ordinary hypersurface singularities are investigated from the overweight perspective of Teissier. 
	The latter are a bigger class of singularities including irreducible plane curve singularities.
	We only point out that the construction of such a re-embedding  requires us to know the generators of the semi-group associated to the singularity (e.g., see \cite{GoldinTeissier} or \cite{FGM}),
	and the latter are closely connected to a Puiseux expansion. 
\end{Ex}

	\begin{Rk} For non-unibranch singularities, using either the method of Puiseux expansion or of \cite{GoldinTeissier}, one would typically first separate the branches and then find a Puiseux expansion for  each. The other methods do not require isolating branches, which get separated along the process.
	
	For instance, with weighted resolution, to resolve the singularity $V((x^3+y^4)(y^3+ x^5))$, which has branches with  distinct tangent cones, one starts with the center $(x^6,y^6)$ leading to a classical blow-up separating the branches and simplifying them simultaneously. 
	
	In contrast, resolving $V((x^3+y^4)(x^3+ y^5))$, with branches having identical tangent cones, the center $(x^6,y^8)$, with reduced center $(x^{1/4},y^{1/3})$, picks out the branch $V(x^3+y^4)$ with smaller exponent $4/3<5/3$ to resolve first.
	
	Note that the result of Goldin and Teissier \cite{GoldinTeissier} was extended from unibranch to non-unibranch curve singularities in \cite{FGM}.
	\end{Rk}


\begin{thebibliography}{dFGPM}
	
	\bibitem[AQ]{AQ}
	D.~Abramovich
	\and 
	M.H.~Quek.
	Logarithmic resolution via multi-weighted blow-ups.
\emph{Épijournal de Géom.~Algébrique}
{\bf 8} (2024), Article No. 15. \url{https://doi.org/10.46298/epiga.2024.9793}	
























	\bibitem[AQS]{AQS-plane}		
	D.~Abramovich,
	M.H.~Quek
	\and
	B.~Schober.
	{Resolving plane curves using stack-theoretic blow-ups}.
	{\em preprint} \url{arXiv:2412.16426} (2024).
	
	














	\bibitem[ATW]{ATW_weighted}
	D.~Abramovich,
	M.~Temkin,
	\and 
	J.~W{\l}odarczyk.
	Functorial embedded resolution via weighted blowings up.
	\emph{Algebra and Number Theory} {\bf 18} (2024), No.~8, 1557–1587.
\url{DOI: 10.2140/ant.2024.18.1557}
























\bibitem[A'CO]{ACampoOka}
N.~A'Campo\and 
M.~Oka.
Geometry of plane curves via Tschirnhausen resolution tower.
\emph{ Osaka J.~Math.} {\bf 33} (1996), no.~5, 1003--1033.







	
	
	
	\bibitem[B]{Bergh}
	D.~Bergh.
	Functorial destackification of tame stacks with abelian stabilisers. 
	{\em Compos.~Math.} {\bf 153} (2017), no.~6, 1257--1315.
	

	
	\bibitem[C1]{Campillo1}	
	A.~Campillo. \emph{Algebroid curves in positive characteristic.} Lecture Notes in Mathematics, 813. Springer, Berlin, 1980. { v}+168 pp. 

	
	\bibitem[C2]{Campillo2}		
	A.~Campillo. \emph{Hamburger-Noether expansions over rings.} Trans. Amer. Math. Soc. 279 (1983), no. 1, 377--388.
	
	








	\bibitem[CJS]{CJS}
	V.~Cossart,
	U.~Jannsen,
	\and
	S.~Saito.
	Desingularization: invariants and strategy -- application to dimension 2. With contributions by Bernd Schober. Lecture Notes in Mathematics, 2270. 
	{\em Springer, Cham}, (2020). viii+256 pp. 
	
	
	\bibitem[CJSch]{CJSch}
	V.~Cossart,
	U.~Jannsen,
	\and
	B.~Schober.
	Invariance of Hironaka's characteristic polyhedron. 
	{\em Rev.~R.~Acad.~Cienc.~Exactas~Fís.~Nat.~Ser.~A Mat.~RACSAM} 
	{\bf 113} (2019), no.~4, 4145--4169. 
































\bibitem[CP1]{Cossart-Pitant-compl}
V.~Cossart
\and
O.~Piltant.
Characteristic polyhedra of singularities without completion.
{\em Math. Ann.} {\bf 361} (2015), no.~1-2, 157--167. 
































	\bibitem[CP2]{Cossart-Pitant}
	V.~Cossart
	\and
	O.~Piltant.
	Resolution of Singularities of Arithmetical Threefolds.
	{\em J.~Algebra} {\bf 529} (2019), 268--535. 
































\bibitem[CPSch]{CPS}
V.~Cossart,
O.~Piltant,
\and B.~Schober.
Constancy of the Hilbert-Samuel function.
{\em Nagoya Math.~J.} {\bf 256} (2025), 938--952.
	
	
	\bibitem[CSch]{CS-Compl}
	V.~Cossart
	\and
	B.~Schober.
	Characteristic polyhedra of singularities without completion. Part II.
	{\em Collect.~Math.} {\bf 72} (2021), no.~2, 351--392.
	








	\bibitem[Cu]{Cutkosky} S.D.~Cutkosky. 
	 Resolution of singularities.
Graduate Studies in Mathematics, 63. American Mathematical Society, Providence, RI, 2004.








	\bibitem[Dest1]{destackification1} 
	D.~Abramovich, N.~Ben Abla, D.~Bhatia, L.~K\"astner, C.~Li, D.~Li, D.~Lister, S.~ Obinna, D.~Silverston, H.~Talbott, and J.~Wong \emph{An OSCAR implementation of Algorithms A-D of Bergh's destackification.} GITHUB library, 2021. 




	
\bibitem[Dest2]{destackification2} 
D.~Abramovich, M.~Kermorgant and A.~Muhialdeen. \emph{An OSCAR implementation of  Algorithm E of Bergh's destackification.} GITHUB library, 2024. 












	\bibitem[FGM]{FGM}
	A.B.~de Felipe, P.D.~González Pérez, \and H.~Mourtada. Resolving singularities of curves with one toric morphism. 
		{\em Math.~Ann.} {\bf 387} (2023), no. 3-4, 1853--1902.
















	\bibitem[GGP]{GGP} 
	E.R.~García Barroso, P.D.~González Pérez, \and P.~Popescu-Pampu. 
		The combinatorics of plane curve singularities: how Newton polygons blossom into lotuses. 
		{\em Handbook of geometry and topology of singularities.} I, 1--150, Springer, Cham.
	
	\bibitem[GS]{Toric1}
	A.~Geraschenko
	\and 
	M.~Satriano.
	Toric stacks I: The theory of stacky fans.
	{\em Trans.~Amer.~Math.~Soc.} {\bf 367} (2015), no.~2, 1033--1071. 
	
















	\bibitem[GoT]{GoldinTeissier}
	R.~Goldin \and B.~Teissier. 
		Resolving singularities of plane analytic branches with one toric morphism. 
		{\em Resolution of singularities (Obergurgl, 1997)}, 315--340, Progr.~Math., 181, {\em Birkh\"auser}, Basel, 2000.








	
	\bibitem[H1]{HiroCharPoly}
	H.~Hironaka.
	Characteristic polyhedra of singularities.
	{\em J.~Math.~Kyoto Univ.} {\bf 7} (1967), 251--293.	
	
	\bibitem[H2]{HiroAdditive}
	H.~Hironaka.
	Additive groups associated with points of a projective space.
	{\em Ann.~of Math.} {\bf 92} (1970), 327-–334








	
	\bibitem[K]{Kollar} J.~Kollár. 
	Lectures on resolution of singularities.
	Ann.~of Math.~Stud., 166
	Princeton University Press, Princeton, NJ, 2007. 
















\bibitem[Ma]{Marzo}
G.~Marzo, \emph{Fully functorial resolutions of complex analytic
  singularities}, Ph.D. thesis, University of Rome “Tor Vergata”, 2019,
  \url{https://www.mat.uniroma2.it/dottorato/Theses/2020/Marzo.pdf},
  pp.~viii+39.
























	\bibitem[Mc]{McQuillan}
	M.~McQuillan.
	Very functorial, very fast, and very easy resolution of singularities.
	{\em Geom. Funct. Anal.} {\bf 30} (2020), no.~3, 858--909. 
	
















	\bibitem[Mi]{Mizutani}
	H.~Mizutani.
	Hironaka's additive group schemes.
	{\em Nagoya Math.~J.} {\bf 52} (1973), 85–-95.








	\bibitem[MS]{PCQO}
	H.~Mourtada \and B.~Schober. 
		A polyhedral characterization of quasi-ordinary singularities. 
		{\em Mosc.~Math.~J.} {\bf 18} (2018), no.~4, 755--785.
	








	\bibitem[P$^2$P$^2$]{Fibonacci}
	M.~Pe Pereira \and P.~Popescu-Pampu. 
	Fibonacci numbers and self-dual lattice structures for plane branches. 
	{\em Bridging algebra, geometry, and topology}, 203--230, Springer Proc.~Math.~Stat., 96, Springer, Cham, 2014.












	\bibitem[Q]{MingHao}
	M.H.~Quek.
	Logarithmic resolution via weighted toroidal blow-ups.
	\emph{Algebraic Geometry} {\bf 9} (2022) no.~3,  311--363 \url{doi:10.14231/AG-2022-010}
		
	\bibitem[QR]{Quek-Rydh}
	M.H.~Quek and D.~Rydh. Weighted blow-ups, available from
 \url{https://davidrydh.se/papers/weighted-blow-ups20220329.pdf}, 2021.
	








































	\bibitem[Re]{Rees}
	D.~Rees.
	On a problem of Zariski.
	{\em Illinois J. Math.} {\bf 2} (1958), 145--149. 
	
	\bibitem[Ro]{Rosenblad} J.-M. Rosenblad. Destackification using weighted blowups. Preprint, 2026.
	
	
	\bibitem[T]{Teissier}
	B.~Teissier. 
		Overweight deformations of affine toric varieties and local uniformization.
		{\em Valuation theory in interaction}, 474--565, EMS Ser.~Congr.~Rep., {\em Eur.~Math.~Soc., Z\"urich}, 2014. 
	































	\bibitem	[W]{Wlodarczyk-Rees} 	
	J.~W{\l}odarczyk. 
	Weighted resolution of singularities: a Rees algebra approach. 
	\emph{New techniques in resolution of singularities,} Abramovich and others, eds. (2023), Cham: Birkhäuser. 
	
\end{thebibliography}
\end{document}